%%%%%%%%%%%%%%%%%%%%%%%%%%%%%%%%%%%%%%%%%%%%%%%%%%%%%%%%%%%%%%%%%
%
%	DOCUMENTCLASS & PACKAGES
%
%%%%%%%%%%%%%%%%%%%%%%%%%%%%%%%%%%%%%%%%%%%%%%%%%%%%%%%%%%%%%%%%%

\documentclass[12pt,reqno,a4paper]{amsart}
\usepackage{amssymb}
\usepackage{amsfonts}
\usepackage[T1]{fontenc}
\usepackage{enumitem} % customize enumerates
\usepackage{hyperref}\hypersetup{colorlinks=true, citecolor=blue}
%\usepackage[hielinks]{hyperref}
%\usepackage{showkeys}
%\graphicspath{{./immaginiprova/}}
\usepackage{color,todonotes}
\usepackage[mathscr]{euscript}
\usepackage{comment} 
%\usepackage{lineno}
%\linenumbers
%\usepackage[notref,notcite]{showkeys}
\usepackage{esint,amsfonts,amsmath,amssymb,epsfig,mathrsfs,bm,accents,yfonts,mathtools,dsfont,stmaryrd}%MnSymbol}
\usepackage{graphicx,color}
\hyphenation{Lip-schitz sub-space}
\usepackage{tikz,fp,ifthen,fullpage,bbm}
\usetikzlibrary{backgrounds}
\usetikzlibrary{decorations.pathmorphing,backgrounds,fit,calc,through}
\usetikzlibrary{arrows}
\usetikzlibrary{shapes,decorations,shadows}
%\usetikzlibrary{shapes.scopes}
\usetikzlibrary{fadings}
\usetikzlibrary{patterns}
\usetikzlibrary{mindmap}
\usetikzlibrary{decorations.text}
\usetikzlibrary{decorations.shapes}

%%%%%%%%%%%%%%%%%%%%%%%%%%%%%%%%%%%%%%%%%%%%%%%%%%%%%%%%%%%%%%%%%
%
%	PAGE SIZE
%
%%%%%%%%%%%%%%%%%%%%%%%%%%%%%%%%%%%%%%%%%%%%%%%%%%%%%%%%%%%%%%%%%

\textwidth = 15.5 cm
\textheight = 24 cm
\hoffset = -0.3 cm
\voffset = .1 cm

\paperwidth = 20 cm
%\paperheight = 22 cm
%\pdfpagewidth 13.9 cm
%\pdfpageheight 21.5 cm
%\hoffset = -4 cm
%\voffset = -3.3 cm

%\oddsidemargin= 0 cm
%\evensidemargin= 0 cm
%\topmargin= 0 cm

\tolerance 1000

%%%%%%%%%%%%%%%%%%%%%%%%%%%%%%%%%%%%%%%%%%%%%%%%%%%%%%%%%%%%%%%%%
%
%	STATEMENTS FORMAT
%	modified version of the standard environments (theorem,
%	lemma, etc.) plus a new environment (parag)
%
%%%%%%%%%%%%%%%%%%%%%%%%%%%%%%%%%%%%%%%%%%%%%%%%%%%%%%%%%%%%%%%%%
\theoremstyle{plain} %to change the headings as suggested by Kleiner
\newtheorem{theorem}{Theorem}[section]
\newtheorem{lemma}[theorem]{Lemma}

\newtheorem{proposition}[theorem]{Proposition}
\newtheorem{corollary}[theorem]{Corollary}
\newtheorem{question}[theorem]{Question}
\newtheorem{remark}[theorem]{Remark}
\newtheorem{example}[theorem]{Example}

\newtheorem{definition}[theorem]{Definition}
\numberwithin{equation}{section}
\newtheorem{quest}[theorem]{Question}
\numberwithin{equation}{section}
\newtheoremstyle{mytheorem}
{}% measure of space to leave above the theorem. E.g.: 3pt
{}% measure of space to leave below the theorem. E.g.: 3pt
{\it}% name of font to use in the body of the theorem
{\parindent}% measure of space to indent
{\bf}% name of head font
{.}% punctuation between head and body
{ }% space after theorem head; " " = normal interword space
{\thmnumber{#2.~}\thmname{#1}\thmnote{~\rm#3}}% Manually specify head
\newtheorem{assumption}[theorem]{Assumption}
\newtheoremstyle{myremark}
{}% measure of space to leave above the theorem. E.g.: 3pt
{}% measure of space to leave below the theorem. E.g.: 3pt
{\rm}% name of font to use in the body of the theorem
{\parindent}% measure of space to indent
{\bf}% name of head font
{.}% punctuation between head and body
{ }% space after theorem head; " " = normal interword space
{\thmnumber{#2.~}\thmname{#1}\thmnote{~\rm#3}}% Manually specify head

\newtheoremstyle{myparagraph}
{}% measure of space to leave above the theorem. E.g.: 3pt
{}% measure of space to leave below the theorem. E.g.: 3pt
{\rm}% name of font to use in the body of the theorem
{\parindent}% measure of space to indent
{\bf}% name of head font
{.}% punctuation between head and body
{ }% space after theorem head; " " = normal interword space
{\thmnumber{#2.~}\thmname{#1}\thmnote{#3}}% Manually specify head

%\theoremstyle{mytheorem}
%\newtheorem{maintheorem}{Theorem}
%\newtheorem{theorem}[subsection]{Theorem}
%\newtheorem{lemma}[subsection]{Lemma}
%
%\newtheorem{corollary}[subsection]{Corollary}
%\newtheorem{proposition}[subsection]{Proposition}
%\newtheorem{example}[subsection]{Example}
%
%\theoremstyle{myremark}
%\newtheorem{remark}[subsection]{Remark}
%%\newtheorem*{remark*}{Remark}
%\newtheorem{remarks}[subsection]{Remarks}
%%\newtheorem*{remarks*}{Remarks}
%\newtheorem{problem}[subsection]{Problem}
%\theoremstyle{myparagraph}
%\newtheorem{parag}[subsection]{}
%\newtheorem*{parag*}{}

%%%%%%%%%%%%%%%%%%%%%%%%%%%%%%%%%%%%%%%%%%%%%%%%%%%%%%%%%%%%%%%%%
%
%	OTHER FORMAT INSTRUCTIONS with purely aesthetic purposes
%	All can be safely removed!
%
%%%%%%%%%%%%%%%%%%%%%%%%%%%%%%%%%%%%%%%%%%%%%%%%%%%%%%%%%%%%%%%%%

\makeatletter
\def\@secnumfont{\sc}
\def\section{\@startsection{section}{1}%
\z@{1.5\linespacing\@plus .2\linespacing}{.7\linespacing}%
{\normalfont\sc\centering}}
\makeatother
	% Here we redifine \section so to increase the spacing
	% before and after section titles

\makeatletter
\def\ps@headings{\ps@empty
 \def\@evenhead{%
  \setTrue{runhead}%
  \normalfont\footnotesize
  \rlap{\thepage}\hfil
  \def\thanks{\protect\thanks@warning}%
  \leftmark{}{}\hfil}%
 \def\@oddhead{%
  \setTrue{runhead}%
  \normalfont\footnotesize\hfil
  \def\thanks{\protect\thanks@warning}%
  \rightmark{}{}\hfil \llap{\thepage}}%
\let\@mkboth\markboth}
\makeatother
	% Here we redifine the headings macro so to
	% change the font type and increase the size
	%

\makeatletter
\renewenvironment{proof}[1][\proofname]{\par
  \pushQED{\qed}%
  \normalfont \topsep6\p@\@plus6\p@\relax
  \trivlist
  \itemindent\normalparindent
  \item[\hskip\labelsep
    \bfseries
    #1\@addpunct{.}]\ignorespaces
}{%
  \popQED\endtrivlist\@endpefalse
}
\providecommand{\proofname}{Proof}
\makeatother
	% Here we redifine the proof macro so to change
	% the font type in "Proof" and add indentation
	%

%%%%%%%%%%%%%%%%%%%%%%%%%%%%%%%%%%%%%%%%%%%%%%%%%%%%%%%%%%%%%%%%%
%
%	ADDITIONAL COMMANDS
%
%%%%%%%%%%%%%%%%%%%%%%%%%%%%%%%%%%%%%%%%%%%%%%%%%%%%%%%%%%%%%%%%%

\newcommand{\Flat}{\mathds{F}}
\newcommand{\Mass}{\mathds{M}}

\newcommand{\R}{\mathbb{R}}

\newcommand{\N}{\mathbb{N}}
\newcommand{\Z}{\mathbb{Z}}

\newcommand{\F}{\mathscr{F}}
\newcommand{\G}{\mathrm{G}}
\newcommand{\Haus}{\mathcal{H}}

\newcommand{\M}{\mathds{M}}

\newcommand{\bd}{\partial}
\newcommand{\eps}{\varepsilon}
\newcommand{\dV}{d_V\kern-1pt}
	%\newcommand{\der}[1]{{d_{#1}\!}}
	%\newcommand{\Gammait}{\mathit{\Gamma\!}}

	% closure operator

	% textstyle \frac command

	% duality pairing (of different size)

	% wedge symbol used for the space of multi (co)vectors
	% larger than  \wedge, but smaller than \bigwedge
	% it does not work as it should in footnotes,
	% where it should be replaced by the following

\DeclareMathOperator{\sign}{sign}

\newcommand{\trait}[3]{\vrule width #1ex height #2ex depth #3ex}
	% black rectangle of given dimensions

%\DeclareMathOperator{\trace}{\trait{.5}{5.5}{.5}\trait{3.5}{0}{.5}}
%\newcommand{\trace}{\mathbin{\trait{.5}{5.5}{.5}\trait{3.5}{0}{.5}}}
\newcommand{\trace}{\mathchoice%
  {\mathbin{\trait{.12}{1.2}{.03}\trait{.8}{0.09}{0.03}}}
  {\mathbin{\trait{.12}{1.2}{.03}\trait{.8}{0.09}{0.03}}}
  {\mathbin{\hskip.15ex\trait{.09}{.84}{0.02}\trait{.56}{.07}{.02}}\hskip.15ex}
  {\mathbin{\trait{.07}{.6}{.01}\trait{.4}{.06}{.01}}}}
	% trace operator

	% variant of the footnote command:
	% a small space is added at the beginning of text.
	% It can be safely replaced by \footnote
\makeatletter
\@addtoreset{footnote}{section}
\makeatother
	% reset footnote counter at the beginning of a section

\newenvironment{itemizeb}
{\begin{itemize}\itemsep=2pt}{\end{itemize}}
\newcommand{\mass}{\mathds{M}} %mass of a current
\newcommand{\Po}{\mathscr{P}} %polyhedral chains
\newcommand{\Rc}{\mathscr{R}} %integer rectifiable currents
 %real rectifiable currents
 %integral currents
\renewcommand{\flat}{\mathds{F}} % integral or real flat distance
%\newcommand{\F}{\mathscr{F}} %integral flat chains
 %real flat chains
 %H-Mass
 %cone

%---------------MACRO PER p-CORRENTI------------------------
 
\newcommand{\modp}{{\rm mod}(p)} %modulo p
 %modulo 2

%----------------MACRO PER MAPPE ARMONICHE------------------

%\newcommand{\cS}{\mathcal{S}} %stratificazione

%--------------MACRO PER MEASURE THEORETICAL OBJECTS------------

\newcommand{\Ha}{\mathcal{H}} %Hausdorff distance

%------------------MACRO VARIE------------------------

 %epsilon
%\newcommand{\spt}{\mathrm{spt}} %support (usually of a measure)
 %support (usually of a function)
%\newcommand{\dist}{\mathrm{dist}} %distance
 %trace
 %divergence
 %diameter
 %geodesic ball
 %geodesic distance
%\newcommand{\Lip}{\mathrm{Lip}} %Lipschitz
 %open nbd

%------------------COMANDI EXTRA---------------------------

\def\XXint#1#2#3{{\setbox0=\hbox{$#1{#2#3}{\int}$ }
\vcenter{\hbox{$#2#3$ }}\kern-.6\wd0}}

%\newcommand{\mres}{\llcorner}
 %weak convergence

\DeclareFontFamily{U}{matha}{\hyphenchar\font45}
\DeclareFontShape{U}{matha}{m}{n}{
  <-6> matha5 <6-7> matha6 <7-8> matha7
  <8-9> matha8 <9-10> matha9
  <10-12> matha10 <12-> matha12
  }{}
% \DeclareFontShape{U}{matha}{m}{n}{
%   <5> <6> <7> <8> <9> <10> gen * matha
%   <10.95> matha10 <12> <14.4> <17.28>
%   <20.74> <24.88> matha12
%   }{}

\DeclareSymbolFont{matha}{U}{matha}{m}{n}
\DeclareMathSymbol{\Lt}{3}{matha}{"CE}

%--------------MACRO PER OPERATORE ---------------------

 %gradient
 %divergence (conflict?)
 %essential supremum

%--------------MACRO PER Q-VALUED MAPS---------------------

 %absolute value
 %large absolute value
 %norm

%-------------VALTO'S MACROS----------------------------

%\newcommand{\wto}{\rightharpoonup}

%\newcommand{\cB}{\mathcal{B}}

%--------------MACRO PER GRUPPI and Co------------

%\newcommand{\G}{\mathbb{G}} %a group
%\newcommand{\cost}{\mathcal{C}}

%----------------------------------------------------

%%%%%%%%%%%%%%%%%%%%%%%%%%%%%%%%%%%%%%%%%%%%%%%%%%%%%%%%%%%%%%%%%%
%
%	FIRST PAGE & HEADLINES
%
%%%%%%%%%%%%%%%%%%%%%%%%%%%%%%%%%%%%%%%%%%%%%%%%%%%%%%%%%%%%%%%%%

%%%%%%%%%%%%%%%%%%%%%%%%%%%%%%%%%%%%%%%%%%%%%

\title{On the structure of flat chains with finite mass}
\author{Giovanni Alberti \& Andrea Marchese}

\begin{document}
\begin{abstract}
We prove that every flat chain with finite mass in $\mathbb{R}^d$ with coefficients in a normed abelian group $G$ is the restriction of a normal $G$-current to a Borel set. We deduce a characterization of real flat chains with finite mass in terms of a pointwise relation between the associated measure and vector field. We also deduce that any codimension-one real flat chain with finite mass can be written as an integral of multiplicity-one rectifiable currents, without loss of mass. 

Given a Lipschitz homomorphism $\phi:\tilde G\to G$ between two groups, we then study the associated map $\pi$ between flat chains in $\mathbb{R}^d$ with coefficients in $\tilde G$ and $G$ respectively. In the case $\tilde G=\mathbb{R}$ and $G=\mathbb{S}^1$, we prove that if $\phi$ is surjective, so is the restriction of $\pi$ to the set of flat chains with finite mass of dimension $0$, $1$, $d-1$, $d$. 
\end{abstract}

\maketitle
\section{Introduction}
\label{intro}
Currents with finite mass are not geometrically significant, for instance because for any $k\leq d$ there are non trivial  $k$-dimensional currents in $\R^d$ with finite mass supported on a single point.
The simplest classes of currents which are geometrically significant are normal currents
and flat chains with finite mass. For example, if $\mu_T$ is the measure associated to a $k$-dimensional  
normal current $T$, see \S \ref{s:res-mass}, then $\mu_T(E)=0$ for every set $E$ of vanishing $k$-dimensional integralgeometric measure, and the same holds for 
a $k$-dimensional flat chain with finite mass, see \cite[Theorem 4.2.14]{Federer}.

In fact normal currents and flat-chains with finite mass share many similar properties, 
and therefore it is natural to ask what are the relations between these two notions.
It is immediate to check that if $T$ is a $k$-dimensional normal current, then $\rho T$ is a flat chain with finite mass for every density function $\rho\in L^1(\mu_T)$.
It is also easy to see that the converse of this result is true if $k=d$; namely every $d$-dimensional flat chain of finite mass in $\R^d$ is (represented by) an $L^1$ function.

However, to the best of our knowledge, the converse of this result for arbitrary 
dimension $k$ is not available in the literature; discussing this converse 
and some related questions is the purpose of this paper.\\

Our first result is the following theorem, which is proved in Section \ref{s:proof_main}; the 
necessary definitions are recalled in Section~\ref{currents}. In the sequel, $K$ will denote a convex, compact subset of $\R^d$ with nonempty interior and $G$ is a normed Abelian group.

\begin{theorem}
\label{maincor}
Let $1\le k <d$, and let $T\in\F_k^G(K)$ be such that $\Mass(T)<\infty$. For every $\varepsilon>0$ there exists $T'\in\mathscr{N}_k^G(K)$ and a Borel set $E\subset K$ such that
\begin{itemize}
\item[(i)]
$\bd T'=0$, 
\item[(ii)] $T=T'\trace E$,
\item[(iii)]
$\M(T')\le (2+\eps)\, \M(T)$.
\end{itemize}
In particular, if $G=\Z$ then $T'$ is an
integral current without boundary.
\end{theorem}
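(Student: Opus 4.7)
The plan is to construct $T'$ as a weak-$*$ limit of explicit cycles built from a polyhedral approximation of $T$ via a translation-plus-cylinder construction. By the density of polyhedral chains in the flat topology together with $\M(T)<\infty$, one approximates $T$ in flat norm by polyhedral chains $P_n\in\Po_k^G(K)$ with $\M(P_n)\to\M(T)$.

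For each $n$ one exploits the homotopy identity associated with the translation $\tau_v(x):=x+v$: setting $H_v(t,x):=x+tv$ and $C_v(S):=(H_v)_\#(\a{[0,1]}\times S)$, one has
$$\partial\bigl(C_v(S)\bigr)+C_v(\partial S)=(\tau_v)_\# S - S.$$
Hence
$$T'_n\;:=\;P_n-(\tau_{v_n})_\# P_n + C_{v_n}(\partial P_n)$$
is a cycle with $\M(T'_n)\le 2\M(P_n)+|v_n|\,\M(\partial P_n)$. Choosing $v_n$ generic and small enough that $|v_n|\,\M(\partial P_n)\le\eps\,\M(P_n)$, the three summands have pairwise $\Ha^k$-null intersection of supports, giving $\M(T'_n)\le(2+\eps)\M(P_n)$ and $T'_n\trace \supp(P_n)=P_n$.

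Weak-$*$ compactness now yields a subsequence $T'_n\weakto T'$ with $\partial T'=0$ and $\M(T')\le(2+\eps)\M(T)$, proving (i) and (iii). For (ii) one writes $T'_n=P_n+S_n$ with $S_n:=T'_n-P_n$ of mass $\le(1+\eps)\M(P_n)$, supported disjointly from $P_n$, so that $\mu_{T'_n}=\mu_{P_n}+\mu_{S_n}$ is a decomposition into mutually singular measures. Passing to the limit one expects $\mu_{T'}=\mu_T+\sigma$ with $\mu_T\perp\sigma$, from which a suitable Borel set $E$ (say $E:=\supp(\mu_T)$ up to measure-theoretic refinements) gives $T=T'\trace E$. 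The final assertion for $G=\Z$ is then automatic: the cycle $T'$ is a $\Z$-valued flat chain of finite mass with vanishing boundary, hence an integral current by rectifiability of integer flat chains with finite mass.

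The main obstacle is proving (ii) in the limit: when $\M(\partial T)=\infty$ the constraint $|v_n|\,\M(\partial P_n)\le\eps\,\M(P_n)$ forces $|v_n|\to 0$, and then $\supp(S_n)$, while always disjoint from $\supp(P_n)$, lies in a shrinking neighborhood of it, so the limit measures $\mu_T$ and $\sigma$ may fail to be mutually singular. Resolving this requires a careful choice of the $v_n$---for instance in a direction uniformly transverse to the approximate tangent planes carrying $\mu_T$---together with a disintegration or slicing argument that preserves the transversal separation of $\mu_T$ and $\sigma$ in the weak-$*$ limit.
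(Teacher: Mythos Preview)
Your single-scale construction of the cycle $T'_n$ from $P_n$ via translation and cylinder is essentially the geometric idea the paper also uses (a small translation of a polyhedral chain to make it singular to a prescribed measure). However, the gap you identify in the limiting step is real, and your suggested fix is not sufficient: there is no reason a direction ``uniformly transverse to the approximate tangent planes carrying $\mu_T$'' should exist (those tangent $k$-planes may well span all of $\R^d$ on a set of positive $\mu_T$-measure), and even with such a choice a weak-$*$ limit need not split as $\mu_T+\sigma$ with $\sigma\perp\mu_T$. For general $G$ there is also no weak-$*$ compactness to invoke without further hypotheses.

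The paper avoids any weak-$*$ limit. Instead of building cycles $T'_n$ and passing to the limit, it constructs a rectifiable $R$ with $\partial R=\partial T$, $\mu_R\perp\mu_T$, and $\M(R)\le(1+\eps)\M(T)$ by an \emph{iterative, mass-convergent} scheme: write $T=P_0+R_0+\partial S_0$ with $P_0$ polyhedral, $\mu_{P_0}\perp\mu_T$, and $\M(R_0)+\M(S_0)\le\eps_0$; then apply the same step to $R_0$ to extract $P_1$ with $\mu_{P_1}\perp\mu_T$ and a remainder $R_1$ of mass $\le\eps_1$; iterate. Since each $P_n$ is made singular to the \emph{fixed} measure $\mu_T$ (not to the varying $P_n$), and $R=\sum_n P_n$ converges in \emph{mass}, the singularity $\mu_R\perp\mu_T$ is preserved automatically. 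Then $T':=T-R$ is the desired cycle, and a Borel set $F$ with $\mu_T(F)=0$, $R=R\trace F$ gives $E:=K\setminus F$ with $T=T'\trace E$.

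The idea you are missing is precisely this shift of viewpoint: rather than producing a cycle at each approximation scale and hoping the singular decomposition survives a compactness limit, fix the target measure $\mu_T$ from the outset and accumulate the singular complement as a mass-convergent series.
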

Section \ref{s:real} focuses on \emph{real} flat chains with finite mass. For every $1\leq k < d$, we associate to every positive Radon measure $\mu$ on $K\subset\R^d$ a map $V_k(\mu,\cdot):K\to{\rm{Gr}}(\Lambda_k(\R^d))$, whose values are vector subspaces of the space of $k$-vectors, see Definition \ref{d:auxiliary}. Then we prove that such map  characterizes flat chains, in the following sense. 
\begin{theorem}\label{t:char_bundle}
\label{s-6.4}
Let $\mu$ be a positive Radon measure on $K$. Let $\tau\in L^1(\mu)$ be a Borel $m$-vector field.
The following statements are equivalent:
\begin{itemizeb}
\item[(i)]
$\tau(x) \in V_k(\mu,x)$ for $\mu$-a.e.~$x$;
\item[(ii)]
$\tau\mu\in\F_k(K)$.
\end{itemizeb}
\end{theorem}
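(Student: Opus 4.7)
The plan is to prove the two implications separately, invoking Theorem \ref{maincor} for the nontrivial direction.

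For $(ii)\Rightarrow(i)$, set $T:=\tau\mu$ and note $\M(T)\le\|\tau\|_{L^1(\mu)}<\infty$. Fixing $\e>0$ and applying Theorem \ref{maincor}, there exist a normal current $T'\in\mathscr{N}_k(K)$ with $\bd T'=0$ and a Borel set $E\subset K$ such that $T=T'\trace E$. On $E$, the polar decomposition forces $\tau(x)$ to be a positive scalar multiple of the $k$-vector field $\tau_{T'}(x)$ for $\mu$-a.e.\ $x$; the identity $\mu_{T'\trace E}=|\tau|\mu\ll\mu$ places $T'\trace E$ within the class of (restrictions of) normal currents whose mass measure is absolutely continuous with respect to $\mu$. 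By the construction of $V_k(\mu,\cdot)$ in Definition \ref{d:auxiliary}, the $k$-vector field of any such current takes values in $V_k(\mu,x)$ at $\mu$-a.e.\ $x$. Hence $\tau(x)\in V_k(\mu,x)$ for $\mu$-a.e.\ $x\in E$, while on $K\setminus E$ one has $\tau\mu=0$, so $\tau=0$ $\mu$-a.e., and (i) follows.

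For the converse $(i)\Rightarrow(ii)$, the idea is to express $\tau\mu$ as a mass-summable series of flat chains. By Definition \ref{d:auxiliary}, the bundle $V_k(\mu,x)$ is generated $\mu$-a.e.\ by the $k$-vector fields at $x$ of a countable family $\{S_j\}$ of normal currents with $\mu_{S_j}\ll\mu$. A measurable stratification of $K$ by the dimension of $V_k(\mu,\cdot)$, combined with a measurable selection argument, allows one to pick Borel coefficients $\sigma_j$ so that $\tau(x)=\sum_j\sigma_j(x)\,\tau_{S_j}(x)$ $\mu$-a.e., with $\sum_j\int|\sigma_j\tau_{S_j}|\,d\mu\le C\,\|\tau\|_{L^1(\mu)}$. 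Each $\sigma_j\tau_{S_j}\mu$ is, up to a Borel density absorbed into $S_j$, a flat chain with finite mass (an $L^1$-multiple of a normal current is flat); by completeness of $\F_k(K)$ and the summability of masses the series converges in flat norm to an element of $\F_k(K)$, which must coincide with $\tau\mu$ as a current.

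The main obstacle is the decomposition step in the converse direction: one needs measurable coefficients reproducing $\tau$ pointwise while keeping the total mass of the summands comparable to $\M(\tau\mu)$. Since $V_k(\mu,\cdot)$ may have non-constant dimension and the vector fields $\tau_{S_j}$ are not pointwise orthogonal, one organizes the argument on Borel strata where $\dim V_k(\mu,\cdot)$ is constant, then iteratively peels off the portion of $\tau$ representable as a controlled combination of finitely many $S_j$. The mass bound from Theorem \ref{maincor}, applied to the residual error at each step, should close the iteration and yield the required estimate, completing the proof.
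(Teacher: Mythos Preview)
Your argument for $(ii)\Rightarrow(i)$ is on the right track and parallels the paper's route through an intermediate condition $(ii')$: there exists a normal current $T'$ with $\partial T'=0$ and $T'=\tau\mu+\sigma$, $\sigma\perp\mu$. One technical point: Theorem~\ref{maincor} as stated only guarantees $\mu_{T'\trace E^c}\perp\mu_T=|\tau|\mu$, not $\perp\mu$; to get the latter you should invoke Proposition~\ref{main} directly with the given measure $\mu$ rather than Theorem~\ref{maincor}. Once $(ii')$ holds, the Lebesgue differentiation step you sketch does yield $(i)$; this is precisely what the paper imports verbatim from~\cite{AM}.

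The direction $(i)\Rightarrow(ii)$ has a genuine gap. You propose to write $\tau=\sum_j\sigma_j\,\tau_{S_j}$ using a countable family of normal currents witnessing $V_k(\mu,\cdot)$ and then sum the resulting flat chains, and you correctly flag the obstacle: without orthogonality there is no a~priori control on $\sum_j\|\sigma_j\tau_{S_j}\|_{L^1(\mu)}$. But the remedy you offer---applying ``the mass bound from Theorem~\ref{maincor}'' to the residual error at each iterative step---is circular. Theorem~\ref{maincor} takes as input a \emph{flat chain} of finite mass and returns a normal extension; at the point where you want to invoke it, the residual $\big(\tau-\text{partial sum}\big)\mu$ is not yet known to be flat, since that is exactly the conclusion you are after. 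The paper avoids this difficulty altogether: it cites~\cite{AM} for the equivalence $(i)\Leftrightarrow(ii')$ (the nontrivial part being the construction of a single global normal current from the pointwise witnesses in Definition~\ref{d:auxiliary}), and then $(ii')\Rightarrow(ii)$ is immediate because a Borel restriction of a normal current is a flat chain of finite mass. Your sketch does not provide a substitute for that construction.
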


In \cite{ABM}, Theorem \ref{maincor} and Theorem \ref{t:char_bundle} are used to characterize those multilinear differential operators which are closable and to derive some results concerning the structure of metric currents in the Euclidean space.

We now move our attention to codimension-one real flat chains. We deduce the following result from Theorem \ref{maincor}. See \S \ref{s-measint} for the definition of integral of measures.

\begin{theorem}\label{t:disint}
Let $T\in\F_{d-1}(K)$ with $\mass(T)<\infty$. Then there exist a compact interval $I\subset\R$ and for every $t\in I$ a multiplicity-one rectifiable  current $R_t\in{\mathscr{R}}_{d-1}(K)$ such that the map $t\mapsto R_t$ satisfies properties (a) and (b) of \S \ref{s-measint} and moreover  $$T=\int_{I}R_t dt\quad {\mbox{and}} \quad \Mass(T)=\int_{I}\Mass(R_t) dt.$$ 
\end{theorem}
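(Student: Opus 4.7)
The plan is to combine Theorem \ref{maincor} with the coarea formula for $BV$ functions. First I would apply Theorem \ref{maincor} to produce a normal cycle $T' \in \mathscr{N}_{d-1}(K)$ with $\bd T' = 0$ and a Borel set $E \subset K$ such that $T = T' \trace E$. Since $T'$ is a $(d-1)$-cycle in $\R^d$ and the relevant homology vanishes, one writes $T' = \bd S$ for some $d$-current $S$; every such $S$ has the form $u\,\a{\R^d}$ for a distribution $u$, and the finite-mass hypothesis on $T'$ forces $u \in BV(\R^d)$. Since $T'$ has compact support in $K$ and $\R^d \setminus K$ is connected (by convexity of $K$), $u$ can be normalized to have compact support in $K$, and after an additional modification described below we may assume $u \in L^\infty$ with essential range contained in a compact interval $I := [-M, M]$.

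Next I would invoke the sharp measure-valued form of the $BV$ coarea formula,
\[
|Du|(A) \;=\; \int_I P(E_t; A)\, dt \qquad \text{for every Borel } A \subset \R^d, \quad E_t := \{u > t\},
\]
which lifts to the current identity $T' = \int_I \bd\,\a{E_t}\, dt$. For a.e.\ $t \in I$ the set $E_t$ is a Caccioppoli set and $\bd\,\a{E_t}$ is a multiplicity-one rectifiable $(d-1)$-current (integration over the reduced boundary, endowed with its canonical orientation). Setting $R_t := (\bd\,\a{E_t}) \trace E$, each $R_t$ is the restriction of a multiplicity-one rectifiable current to a Borel set, so $R_t \in \Rc_{d-1}(K)$ with unit multiplicity. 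Restricting the current identity to $E$ yields $T = T' \trace E = \int_I R_t\, dt$, while restricting the measure identity to $A = E$ yields the mass equality $\M(T) = \mu_{T'}(E) = \int_I P(E_t; E)\, dt = \int_I \M(R_t)\, dt$. The measurability conditions (a)--(b) of \S\ref{s-measint} follow from the standard Borel dependence of $E_t$ (and of its reduced boundary) on the parameter $t$.

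The main obstacle I expect is justifying that the parameter interval can be taken compact, i.e.\ that $u$ can be chosen essentially bounded. In dimension $d \ge 2$ a $BV$ function with compact support need not lie in $L^\infty$ --- a standard example is $u = \sum_n n^{-1}\mathbf{1}_{B_{1/n}(0)}$, which is in $BV$ with compact support but unbounded --- so the potential dual to the $T'$ produced by Theorem \ref{maincor} may fail to be bounded. The remedy is to add to $T'$ a normal cycle supported in $\R^d \setminus E$ which compensates the diverging levels of $u$: one prescribes an ``in-annulus'' behaviour of a modified potential $u''$ cancelling the accumulated jumps while enforcing $Du''\mres E = Du\mres E$. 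The resulting $T'' := \bd(u''\,\a{\R^d})$ is then a normal cycle with bounded potential satisfying $T'' \trace E = T' \trace E = T$, and the coarea argument above applied to $T''$ delivers the conclusion. Proving that such a compensating modification always exists, in a measurable and quantitative fashion, is where the genuine technical work lies.
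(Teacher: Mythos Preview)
Your overall strategy is exactly the paper's: apply Theorem~\ref{maincor} to obtain a normal cycle $T'$ and a Borel set $E$ with $T=T'\trace E$, write $T'=\bd(u\,\a{\R^d})$ for some compactly supported $u\in BV$, decompose $T'$ as a superposition of multiplicity-one rectifiable $(d-1)$-currents, and then restrict each piece to $E$. All the auxiliary steps you outline (the passage to a $BV$ potential via the constancy theorem, the mass identity through the measure form of the decomposition, the restriction to $E$) are correct and match the paper. The only divergence is in the decomposition step itself: where you invoke the coarea formula, the paper cites \cite[Theorem~1.10]{Alb_rk1}, a decomposition theorem for derivatives of $BV$ functions that directly yields $T'=\int_I R'_t\,dt$ together with $\mu_{T'}=\int_I\mu_{R'_t}\,dt$ over a \emph{compact} interval $I$, with no boundedness assumption on $u$. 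With that in hand the paper simply sets $R_t:=R'_t\trace E$ and is done.

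The compact-interval obstacle you flag is therefore real for the level-set parametrization, but your proposed remedy does not work in general. Take $T$ itself to be the normal cycle $\bd(u\,\a{\R^d})$ with your own example $u=\sum_n n^{-1}\mathbf{1}_{B_{1/n}(0)}$; since $\bd T=0$, the pair $T'=T$, $E=\R^d$ is a valid output of Theorem~\ref{maincor}, and then $\R^d\setminus E$ is empty, so there is nowhere to place a compensating cycle---any $u''$ with $Du''\trace E=Du$ equals $u$ up to an additive constant and remains unbounded. More generally, nothing in Theorem~\ref{maincor} guarantees that $\R^d\setminus E$ is rich enough to carry the jumps you would need. The correct resolution is not to force the potential into $L^\infty$ but to abandon the level-set parametrization of the superposition; this is precisely what \cite[Theorem~1.10]{Alb_rk1} does, and invoking it closes the gap with no further work.
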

\begin{remark}
{\rm When $G$ is a finite dimensional vector space, currents with coefficients in $G$ can be defined by duality with certain differential forms, and a $G$-current is uniquely determined by its \emph{components}, which are currents with real coefficients, see \cite[\S 4]{MMST}. Moreover it is easy to see that a $G$-current is a flat $G$-chain if and only if its components are real flat chains. Hence the natural generalization of Theorem \ref{t:char_bundle} holds. Analogously, a $G$-current is rectifiable if and only if its components are rectifiable currents, hence the first part of Theorem \ref{t:disint} remains valid, even though the property $\Mass(T)=\int_{I}\Mass(R_t) dt$ might fail, because typically the mass of a $G$-current is strictly smaller than the sum of the masses of its components.}   
\end{remark}

In Section \ref{s:lifting}, we address the following question. A surjective Lipschitz homomorphism $\phi:\tilde G\to G$ between two groups, induces a surjective map $\pi$ between flat chains in $\mathbb{R}^d$ with coefficients in $\tilde G$ and $G$ respectively. We ask whether or not $\pi$ is surjective between the corresponding sets of flat chains with finite mass. In the case $\tilde G=\mathbb{R}$ and $G=\mathbb{S}^1$, we give a positive answer for $k\in\{0, 1, d-1, d\}$. This allows us to partially extend Theorem \ref{t:char_bundle} and Theorem \ref{t:disint} to other groups than finite dimensional vector spaces.

\subsection*{Acknowledgements}
A part of this paper was conceived when both authors were guests of CIRM Trento through the program Research in pairs. We acknowledge the research center for the support. We would like to thank Benoit Merlet for precious comments. The research of G.A.\ and A.M.\ has been partially supported by the Italian Ministry of University and Research via the PRIN project~2022PJ9EFL "Geometric Measure Theory: Structure of Singular Measures, Regularity Theory and Applications in the Calculus of Variations".

\section{Currents with coefficients in groups}
\label{currents}
%\todo{Serve ancora?}{{\color{red} Assicuriamoci che esista (non necessariamente in questa sezione) una frase/remark che dica che le 1-correnti a coeff in $\R^n$ sono identificabili con misure vettoriali in $\R^{n\times m}$ e le zero correnti con misure in $\R^{n}$, in modo che il problema introdotto nella sezione 3 risulti ben definito per correnti}}

In this section we collect the fundamental notions concerning flat chains in $\R^d$ with coefficients in a normed Abelian group. For a thorough discussion about this topic, we refer the reader to the seminal paper \cite{Fleming} and to the more recent \cite{DPH}.

\subsection{Polyhedral chains with coefficients in a normed group}
Let $G = \left(G, + \right)$ denote an Abelian group. A \emph{norm} on $\G$ is any function $\| \cdot \| \colon G \to \R$
satisfying the following properties:
\begin{itemize}
\item[$(i)$] $\| g \| \geq 0$ for every $g \in G$, and $\| g \| = 0$ if and only if $g = 0$;

\item[$(ii)$] $\| - g \| = \| g \|$ for every $g \in G$;

\item[$(iii)$] $\| g + h \| \leq \| g \| + \| h \|$ for every $g,h \in G$.
\end{itemize}

We will assume that $(G,\|\cdot\|)$ is a complete metric space, with respect to the natural metric ${\rm d}(g,h) := \| g - h \|$ for $g,h \in G$.

Let $K$ be a convex, compact subset of $\R^{d}$. A $k$-dimensional \emph{polyhedral chain} in $K$ with coefficients in $G$ (or simply a polyhedral $G$-chain) is a formal \emph{finite} linear combination
\begin{equation} \label{def:polyhedral}
P = \sum_{\ell=1}^{N} g_{\ell} \llbracket\sigma_\ell\rrbracket,
\end{equation}
where $g_{\ell} \in G$ and $\llbracket\sigma_\ell\rrbracket$ are the integral currents associated to the non-overlapping oriented $k$-simplexes $\sigma_\ell\subset K$. A \emph{refinement} of $P$ is any $k$-dimensional polyhedral $G$-chain of the form
\[
\sum_{\ell=1}^{N} \sum_{h=1}^{H_{\ell}} g_{\ell}^h\, \llbracket \sigma_\ell^h \rrbracket,
\]
where $\sigma_\ell^h\cup\ldots\cup \sigma_\ell^{H_\ell}=\sigma_\ell$ and $g_{\ell}^h = g_\ell$ if $\sigma_{\ell}^h$ has the same orientation of $\sigma_\ell$ or $g_\ell^h = - g_\ell$ otherwise. We identify two polyhedral $G$-chains if they have a common refinement. 

The sum of two polyhedral $G$-chains $P_1$ and $P_2$ can be defined by taking refinements such that the corresponding simplexes are either non overlapping or they coincide and then by considering their formal sum, identifying $g_1\llbracket\sigma\rrbracket+g_2\llbracket\sigma\rrbracket$ with $(g_1+g_2)\llbracket\sigma\rrbracket$. With respect to such operation, the set of $k$-dimensional polyhedral $G$-chains in $K$ is a group which will be denoted $\Po_k^G(K)$. Given a polyhedral $G$-chain $P\in\Po_k^G(K)$ as in \eqref{def:polyhedral}, we denote by supp$(P)$ its support, that is the set $\bigcup_{\ell=1}^N\{\sigma_\ell:g_\ell\neq 0\}$ and the
\emph{mass} of $P$ is defined by
\begin{equation} \label{def:mass}
\mass(P) := \sum_{\ell=1}^{N} \| g_\ell \| \Ha^{k}(\sigma_\ell)\,.
\end{equation}

\subsection{Rectifiable currents with coefficients in a normed group}
The group of $k$-dimensional \emph{Lipschitz $G$-chains} in $K$ is given by 
\begin{equation*}
\begin{split}
\mathscr{L}_{k}^G(K) := \bigg\lbrace \sum_{\ell=1}^{N} g_{\ell} \cdot (\gamma_{\ell})_{\sharp} \llbracket\sigma_\ell\rrbracket \, \colon \, &\mbox{each $\sigma_\ell$ is an oriented $k$-simplex in $\R^{k}$,}\\
&\mbox{$g_\ell \in G$ and $\gamma_{\ell} \colon \sigma_\ell \to K$ is Lipschitz}\bigg\rbrace\,,
\end{split}
\end{equation*}
where $\gamma_{\sharp}$ is the \emph{push-forward} according to the Lipschitz map $\gamma$.

The mass functional can be easily extended to $\mathscr{L}_{k}^G(K)$. The $\mass$-completion of $\mathscr{L}_{k}^G(K)$ is the group $\Rc_{k}^G(K)$ of $k$-dimensional \emph{rectifiable currents} with coefficients in $G$. 

\subsection{Boundary and flat norm}\label{subs_flat}
If $P \in \Po_{k}^G(K)$ has the form \eqref{def:polyhedral} then the \emph{boundary} of $P$ is the $(k-1)$-dimensional polyhedral $G$-chain defined by
\begin{equation*}
\partial P := \sum_{\ell=1}^{N} g_{\ell} \partial \llbracket\sigma_{\ell}\rrbracket\,.
\end{equation*}

The \emph{flat norm} of $P$ is the quantity
\begin{equation*}
\flat(P) := \inf\left\lbrace \mass(Q) + \mass(P - \partial Q) \, \colon \, Q \in \Po_{k+1}^G(K) \right\rbrace\,.
\end{equation*}

%Observe that $\flat(P) \leq \mass(P)$ by definition, and that $\flat(\partial P) \leq \flat(P)$ (note that, as usual, $\partial (\partial Q) = 0$ for every polyhedral $Q$). 

\subsection{Flat $G$-chains}
The $\flat$-completion of $\Po_{k}^G(K)$ is the group $\F_{k}^G(K)$ of $k$-dimensional \emph{flat $G$-chains} in $K$. The notion of mass extends to $\F_{k}^G(K)$ by relaxation. Moreover, the flat norm of $T\in\F_{k}^G(K)$ can be computed as 
\begin{equation}\label{e:form_flat}
\flat(T) = \inf\left\lbrace \mass(Q) + \mass(F - \partial Q) \, \colon \, Q \in \F^G_{k+1}(K) \right\rbrace\,.
\end{equation}

A flat $G$-chain $T$ of finite mass such that $\partial T$ has also finite mass is called a \emph{normal current} with coefficients in $G$. We denote by $\mathscr{N}_k^G(K)$ the set of $k$-dimensional normal currents in $K$ with coefficients in $G$.\\

There is a close relationship between flat chains and rectifiable currents with coefficients in a group. Being unable to find a reference for the following elementary fact in the setting that we consider, we include the short proof.

\begin{proposition}\label{p:somma}
Let $T\in\F_k^G(K)$. Then there exist $R\in{\mathscr{R}}^G_k(K)$ and $S\in{\mathscr{R}}_{k+1}^G(K)$ such that $T=R+\partial S$. In particular, if $\Mass(T)<\infty$, then $\Mass (\partial S)<\infty$.
\end{proposition}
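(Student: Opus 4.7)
The natural approach is to exploit the very definition of $\F_k^G(K)$ as the $\flat$-completion of $\Po_k^G(K)$, and the fact that polyhedral $G$-chains are rectifiable. Concretely, I would choose a rapidly convergent polyhedral approximation $(P_n)\subset \Po_k^G(K)$ of $T$ with $\flat(T-P_n)\le 2^{-n}$. Then the successive differences $P_{n+1}-P_n$ have flat norm at most $3\cdot 2^{-n-1}$, and by the very definition of the flat norm of a polyhedral chain there exist $B_n\in\Po_{k+1}^G(K)$ and $A_n:=P_{n+1}-P_n-\partial B_n\in\Po_k^G(K)$ such that
\[
\mass(A_n)+\mass(B_n)\le 2^{-n+1}.
\]

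Next I would define the candidate decomposition
\[
R:=P_1+\sum_{n=1}^\infty A_n,\qquad S:=\sum_{n=1}^\infty B_n.
\]
Since $\Rc_k^G(K)$ and $\Rc_{k+1}^G(K)$ are, by construction, the $\mass$-completions of Lipschitz $G$-chains, and the mass series $\sum\mass(A_n)$ and $\sum\mass(B_n)$ are summable, the partial sums converge in mass, so $R\in\Rc_k^G(K)$ and $S\in\Rc_{k+1}^G(K)$. The telescoping identity
\[
P_1+\sum_{n=1}^{N}(A_n+\partial B_n)=P_{N+1}
\]
together with the fact that $\partial$ and the inclusion $\Mass\ge \flat$ are continuous in mass, gives that $R+\partial S$ is the $\flat$-limit of $P_{N+1}$, which equals $T$ by construction.

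For the final clause, if $\mass(T)<\infty$ then, since $R$ is a sum convergent in mass,
\[
\mass(R)\le \mass(P_1)+\sum_{n=1}^\infty \mass(A_n)<\infty,
\]
and $\partial S=T-R$ gives $\mass(\partial S)\le \mass(T)+\mass(R)<\infty$.

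\textbf{Main obstacle.} The only non-routine point is extracting the decomposition $P_{n+1}-P_n=A_n+\partial B_n$ with mass control from a flat-norm bound; but this is immediate from the definition of the flat norm on $\Po_k^G(K)$ in Section~\ref{subs_flat}. The rest is bookkeeping of convergent mass series, together with the fact, built into the definitions, that rectifiable $G$-chains form a $\mass$-complete space containing the polyhedral $G$-chains.
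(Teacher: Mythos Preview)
Your proof is correct and follows essentially the same telescoping argument as the paper: approximate $T$ by polyhedral chains $P_n$ with geometrically decaying flat distance, decompose each difference $P_{n+1}-P_n$ as a finite-mass piece plus a boundary using the definition of $\flat$, and sum the two resulting mass-Cauchy series in $\Rc_k^G(K)$ and $\Rc_{k+1}^G(K)$. The only differences from the paper's version are cosmetic (constants, notation, and that you fix $\flat(T-P_n)\le 2^{-n}$ directly rather than passing to a subsequence).
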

\begin{proof}
Let $P_n$ be a sequence in $\Po_{k}^G(K)$ such that $\flat(T-P_n)\to 0$ as $n\to\infty$. Without loss of generality we may assume that $\flat(P_{n+1}-P_n)\leq 2^{-n-1}$ for every $n$. For every $n$ let $R_n\in\Po_{k}^G(K)$ and $S_n\in\Po_{k+1}^G(K)$ be such that $$P_{n+1}-P_n=R_n+\partial S_n\quad\mbox{ and }\quad \mass(R_n)+\mass(S_n)\leq 2\flat(P_{n+1}-P_n)\leq 2^{-n}.$$ Note that the sequences $Z_n:=P_1+\sum_{h\leq n}R_h$ and $Q_n:=\sum_{h\leq n}S_h$ converge in mass; we denote the limits $R$ and $S$ respectively and we observe that, being limits in mass of rectifiable currents, they are also rectifiable. Since $P_{n+1}=Z_n+\partial Q_n$ for every $n$, passing the equality in the limit, we conclude that $T=R+\partial S$.
\end{proof}
\begin{remark}\label{r:bound_rect}
{\rm It follows immediately from Proposition \ref{p:somma} that every flat $G$-chain of finite mass is rectifiable provided a \emph{boundary rectifiablity theorem} holds. Namely, whenever all the rectifiable $G$-currents $T\in\mathscr{R}_{k+1}^G(K)$ such that $\Mass(\partial T)<\infty$ also satisfy $\partial T\in \mathscr{R}_{k}^G(K)$, see also \cite{Wh_rec}}.
\end{remark}
The following is a well known characterization of flat chains with finite mass, see \cite[\S 4.1.17]{Federer}. 
\begin{proposition}\label{p:series_normal}
Let $T\in\F_k^G(K)$ with $\Mass(T)<\infty$. Then $T$ is a limit in mass of normal currents.
\end{proposition}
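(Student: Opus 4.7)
The natural plan is to use the decomposition in Proposition~\ref{p:somma} to split $T$ into a rectifiable piece (which is easy to approximate in mass by Lipschitz chains) and a boundary piece (which is itself normal). More precisely, write $T=R+\partial S$ with $R\in\Rc_k^G(K)$ and $S\in\Rc_{k+1}^G(K)$. Since $\mass(T)<\infty$, the proposition guarantees $\mass(\partial S)<\infty$, and the construction in its proof also yields $\mass(R)<\infty$ (the rectifiable representative $R$ is obtained as a $\mass$-limit of polyhedral chains with uniformly bounded mass).

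Next, I would observe that $\partial S$ is already a normal current: it is flat, has finite mass by the above, and its own boundary $\partial(\partial S)=0$ is trivially of finite mass. So $\partial S\in\mathscr N_k^G(K)$ with nothing to approximate. It remains to produce a normal approximation of $R$.

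By definition, $\Rc_k^G(K)$ is the $\mass$-completion of $\mathscr L_k^G(K)$, so there exists a sequence of Lipschitz $G$-chains $L_n\in\mathscr L_k^G(K)$ with $\mass(R-L_n)\to0$. Each Lipschitz chain is automatically a normal current: if $L=\sum_\ell g_\ell(\gamma_\ell)_\sharp\a{\sigma_\ell}$, then $\mass(L)<\infty$ (bounded by $\sum_\ell\|g_\ell\|\Lip(\gamma_\ell)^k\Ha^k(\sigma_\ell)$) and $\partial L=\sum_\ell g_\ell(\gamma_\ell)_\sharp\partial\a{\sigma_\ell}$ is the push-forward of a polyhedral $(k-1)$-chain of finite mass under a Lipschitz map, hence of finite mass as well. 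Setting $T_n:=L_n+\partial S$, each $T_n$ lies in $\mathscr N_k^G(K)$ (sum of two normal currents, with $\partial T_n=\partial L_n$), and
\[
\mass(T-T_n)=\mass(R-L_n)\longrightarrow 0,
\]
which is the desired approximation.

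The potential obstacle is the verification that Lipschitz $G$-chains really are normal, i.e.\ that the formal boundary formula gives a chain of finite mass. This is the only place where one uses in an essential way that the push-forward by a Lipschitz map does not increase mass by more than a multiplicative constant depending on the Lipschitz constant; for $G$-valued chains this requires a small but routine check starting from the definition of $\mass$ on $\mathscr L_k^G(K)$. Everything else is a direct concatenation of Proposition~\ref{p:somma} and the definition of $\Rc_k^G(K)$ as a $\mass$-completion.
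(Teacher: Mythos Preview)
Your argument is correct and follows essentially the same strategy as the paper: decompose $T=R+\partial S$ via Proposition~\ref{p:somma}, note that $\partial S$ is already normal, approximate $R$ in mass by normal currents, and add $\partial S$. The only difference is that the paper reuses the polyhedral partial sums $Z_n$ built inside the proof of Proposition~\ref{p:somma} as the mass-approximants of $R$; since polyhedral chains are manifestly flat with finite mass and finite boundary mass, this sidesteps the (admittedly routine) verification that Lipschitz $G$-chains are flat and normal which your version requires.
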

\begin{proof}
If $\M(T)<\infty$, then the current $S$ in the proof of Proposition \ref{p:somma} satisfies $\Mass(\partial S)<\infty$. Moreover the currents $Z_n$ are normal and converge in mass to $R$, hence the currents $Z_n+\partial S$ are normal and they converge in mass to $T$. 
\end{proof}

\subsection{Restriction and mass measure}\label{s:res-mass}
If $T\in\F_{k}^G(K)$ has finite mass and $E\subset K$ is a Borel set, then one can define the flat chain $T\trace E$, see \cite[\S 4]{Fleming}. This allows one to define a Radon measure $\mu_T$ by  
$$\mu_T(E):=\mass(T\trace E)\, .$$
In particular $T\trace E=0$ whenever $\mu_T(E)=0$. For $T\in\Rc_k^G(K)$ of the form $$T=\sum_{\ell\in\N} g_{\ell} \cdot (\gamma_{\ell})_{\sharp} \llbracket\sigma_\ell\rrbracket,$$ the corresponding measure $\mu_T$ is a $k$-rectifiable measure, and more precisely it is absolutely continuous with respect to $\Haus^k\trace(\bigcup_\ell\gamma_{\ell} (\sigma_\ell))$, see \cite[\S 3.6]{DPH}.

\subsection{Currents with coefficients in $\R$ or $\Z$} When $G=\R$, the notion of flat chains and rectifiable currents with coefficients in $G$, and the notions of mass and flat norm defined above correspond to the same notions defined in \cite{Federer}, see \cite[Theorem 4.1.23]{Federer}. In this case, instead of $\Po_k^\R(K),\mathscr{R}_k^\R(K),\F_k^\R(K)$ and $\mathscr{N}^\R_k(K)$ we simply write $\Po_k(K),\mathscr{R}_k(K),\F_k(K)$, and $\mathscr{N}_k(K)$. The same equivalence holds for $G=\Z$. The equivalence between the two notions of mass follows directly from Remark \ref{r:bound_rect} and \cite[Theorem 9.1]{MMST}.

All the elements of these groups are \emph{classical} $k$-dimensional currents, namely continuous linear functionals on the space of smooth differential $k$-forms with compact support. In particular, a $k$-dimensional real flat chain $T$ of finite mass can be identified with a Radon measure with values on the space of $k$-vectors: in this case we write $T=\tau\mu$, being $\mu$ a positive finite measure and $\tau$ a unit $k$-vector field.

\section{Proof of Theorem \ref{maincor}}\label{s:proof_main}
We begin with an elementary approximation lemma.
\begin{lemma}\label{l:uno} Let $T$ be as in Theorem \ref{maincor}. For every $\varepsilon>0$ there exists $P\in\Po_k^G(K)$ with ${\rm{supp}}(P)\subset$ ${\rm{int}}(K)$, $R\in\F_k^G(K)$, and $S\in \F_{k+1}^G(K)$ such that $$T-P=R+\partial S,\quad\mass(R)+\mass(S)\leq\varepsilon,\quad\mass(P)\leq(1+\varepsilon)\mass(T).$$
\end{lemma}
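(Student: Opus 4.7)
The plan is to first produce a polyhedral $G$-chain $P'$ (whose support may touch $\bd K$) which is close to $T$ in flat norm and has mass at most $(1+\varepsilon/2)\mass(T)$, and then to push $P'$ slightly into $\Int(K)$ by a small affine contraction. The key observation for the mass estimate is that the mass on $\F_k^G(K)$ is by construction the relaxation of the polyhedral mass with respect to flat convergence. Consequently, assuming $\mass(T)>0$ (the case $\mass(T)=0$ forces $T=0$ and the statement is trivial), for every $\delta>0$ one obtains, by the definition of the relaxation combined with a subsequence extraction, a polyhedral chain $P'\in\Po_k^G(K)$ with
\[
\flat(T-P')\le \delta \quad\text{and}\quad \mass(P')\le (1+\varepsilon/2)\mass(T).
\]

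Next, fix any $x_0\in\Int(K)$ and, for $t\in(0,1]$, consider the affine contraction $\Phi_t(x):=(1-t)x+tx_0$. Since $K$ is convex with nonempty interior, $\Phi_t(K)\subset\Int(K)$. Setting $P:=(\Phi_t)_\sharp P'$ one obtains a polyhedral chain with $\supp(P)\subset\Int(K)$, and since $\Phi_t$ is $(1-t)$-Lipschitz,
\[
\mass(P)\le (1-t)^k\mass(P')\le (1+\varepsilon)\mass(T),
\]
which is the desired mass bound.

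It remains to control $T-P=(T-P')+(P'-P)$ in flat norm. The first summand is handled by \eqref{e:form_flat}, which provides a decomposition with total mass at most $2\delta$. For the second summand one applies the standard homotopy formula to the smooth family $\{\Phi_s\}_{s\in[0,t]}$: since $P'$ is polyhedral both $\mass(P')$ and $\mass(\bd P')$ are finite, and the formula yields a decomposition $P'-P=R''+\bd S''$ with
\[
\mass(R'')+\mass(S'')\le C(K)\,t\,\bigl(\mass(P')+\mass(\bd P')\bigr).
\]
Choosing first $\delta<\varepsilon/4$ to fix $P'$ and then $t$ small enough, summing the two contributions produces the required decomposition $T-P=R+\bd S$ with $\mass(R)+\mass(S)\le\varepsilon$. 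The main technical subtlety is precisely the order of quantifiers: the contraction parameter $t$ must be selected \emph{after} $P'$, because the constant in the homotopy estimate depends on the a priori uncontrolled quantity $\mass(\bd P')$.
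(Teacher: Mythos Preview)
Your proof is correct and follows essentially the same approach as the paper's: both first pick a polyhedral approximant $P'$ using the relaxation definition of mass together with \eqref{e:form_flat}, then push it into $\Int(K)$ via an affine contraction toward a fixed interior point (your $\Phi_t$ is the paper's $f_\lambda$ with $\lambda=1-t$), and finally control $\flat(P'-P)$ by the homotopy formula. Your explicit remark on the order of quantifiers---that $t$ must be chosen after $P'$ because the homotopy bound involves $\mass(\bd P')$---is a nice clarification that the paper leaves implicit.
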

\begin{proof}
By the definition of mass and \eqref{e:form_flat}, for every $\varepsilon$ there exist $P'\in\Po_k^G(K)$, $R'\in\F_k^G(K)$, and $S'\in \F_{k+1}^G(K)$ such that $$T-P'=R'+\partial S',\quad\mass(R')+\mass(S')\leq\varepsilon/2,\quad \mass(P')\leq(1+\varepsilon)\mass(T).$$ We aim to find $P\in\Po_k^G(K)$ with ${\rm{supp}}(P)\subset$ ${\rm{int}}(K)$,  such that 
\begin{equation}\label{emassuno}
\Mass(P)\leq \Mass(P')
\end{equation}
and
\begin{equation}\label{eflatuno}
    \Flat(P-P')\leq\varepsilon/2.
\end{equation} 
Fix $p\in{\rm{int}}(K)$ and for every $\lambda\in[0,1]$, consider the affine map $f_\lambda:K\to K$ defined by $x\mapsto p+\lambda (x-p)$ and observe that, for every $\lambda<1$, $f_\lambda(K)\subset{\rm{int}}(K)$. Notice that from \eqref{def:mass} it follows $\mass((f_\lambda)_\sharp P')=\lambda^k\mass(P')$, hence every choice of  $\lambda\in[0,1)$ yields \eqref{emassuno} for $P:=(f_\lambda)_\sharp P'$.

We show now that \eqref{eflatuno} holds for a suitable choice of $\lambda$. Denoting $h:[0,1]\times K\to K$ the linear homotopy between $f_\lambda$ and $Id$ given by $h(t,x):=(1-t)f_\lambda(x)+tx$, the homotopy formula, see \cite[26.22, 26.23]{Si}, yields
\begin{equation}\label{e:stimaflat1}
    \begin{split}
\flat(P'-P)&=\flat(\partial h_\sharp(\llbracket[0,1]\rrbracket\times P')+h_\sharp(\llbracket[0,1]\rrbracket\times \partial P'))\\
&\leq \Mass(h_\sharp(\llbracket[0,1]\rrbracket\times P'))+\Mass(h_\sharp(\llbracket[0,1]\rrbracket\times \partial P'))\\ 
 &\leq\sup_{{\rm{supp}}(P')}|Id-f_\lambda|(|df_\Lambda|+|dId|)(\mass(P')+\mass(\partial P'))\\
 &\leq(1-\lambda){\rm {diam}}(K)2(\mass(P')+\mass(\partial P')).
    \end{split}
\end{equation}
Hence, \eqref{eflatuno} holds if $\lambda$ is chosen sufficiently close to 1. 
\end{proof}

In the next corollary we observe that the approximating polyhedral current $P$ constructed in Lemma \ref{l:uno} can be found in such a way that $P$ is singular with respect to any fixed Radon measure.
\begin{corollary}\label{c:uno} Let $T$ be as in Theorem \ref{maincor} and let $\mu$ be a Radon measure. For every $\varepsilon>0$ there exist $P\in\Po_k^G(K)$, with $\mu_P\perp\mu$, and $R\in \F_k^G(K)$, $S\in \F_{k+1}^G(K)$ such that $$T-P=R+\partial S,\quad\mass(R)+\mass(S)\leq\varepsilon,\quad \mass(P)\leq(1+\varepsilon)\mass(T).$$ 
\end{corollary}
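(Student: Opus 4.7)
The plan is to reduce the statement to Lemma~\ref{l:uno} and then achieve the singularity condition $\mu_P \perp \mu$ by slightly translating the polyhedral chain produced there. Translation is the natural tool: it preserves mass exactly, maps polyhedral chains to polyhedral chains, and changes the chain by an amount controlled in flat norm by the translation size (via the homotopy formula, exactly as in the proof of Lemma~\ref{l:uno}). Since the support of a polyhedral $k$-chain with $k < d$ is $\mathcal{L}^d$-negligible, a Fubini argument will show that generic small translations move the support into a $\mu$-null set, which is exactly what is needed.

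Concretely, I would first apply Lemma~\ref{l:uno} with parameter $\eps/2$ to obtain $P' \in \Po_k^G(K)$ with $\supp(P') \subset \mathrm{int}(K)$, together with $R',S'$ satisfying $T - P' = R' + \partial S'$, $\mass(R') + \mass(S') \le \eps/2$, and $\mass(P') \le (1+\eps/2)\mass(T)$. For $v \in \R^d$ I set $\tau_v(x) := x+v$ and $P_v := (\tau_v)_\sharp P'$. Translation invariance of $\Haus^k$ gives $\mass(P_v) = \mass(P')$, and because $\supp(P')$ is compactly contained in $\mathrm{int}(K)$, one has $P_v \in \Po_k^G(K)$ for all sufficiently small $v$.

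The main step is to produce arbitrarily small $v$ with $\mu(\supp(P_v)) = 0$. Setting $E := \supp(P')$, a finite union of $k$-simplices with $k<d$, we have $\mathcal{L}^d(E) = 0$, so Tonelli's theorem yields
\[
\int_{B_\delta(0)} \mu(E+v)\, dv \;=\; \int_{\R^d} \mathcal{L}^d\bigl(B_\delta(0) \cap (x-E)\bigr)\, d\mu(x) \;=\; 0
\]
for every $\delta>0$, since the inner integrand is pointwise zero. Hence $\mu(E+v)=0$ for $\mathcal{L}^d$-a.e.~$v \in B_\delta(0)$, and for any such $v$ the measure $\mu_{P_v}$ is concentrated on $E+v$ while $\mu$ gives it zero mass; thus $\mu_{P_v} \perp \mu$.

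To conclude, the homotopy formula applied to $h(t,x) := x + tv$ produces the decomposition
\[
P_v - P' \;=\; \partial h_\sharp\bigl(\a{[0,1]} \times P'\bigr) + h_\sharp\bigl(\a{[0,1]} \times \partial P'\bigr),
\]
with the sum of the masses of the two summands bounded by a constant multiple of $|v|\bigl(\mass(P') + \mass(\partial P')\bigr)$. Shrinking $\delta$ so that this bound does not exceed $\eps/2$ and picking $v \in B_\delta(0)$ supplied by the previous paragraph, I would set $P := P_v$ and absorb the two homotopy pieces into $R'$ and $S'$; the three required estimates then follow by the triangle inequality. I expect the Fubini computation supplying a small admissible translation to be the only step that requires genuine care — everything else is routine reshuffling of the pieces produced by Lemma~\ref{l:uno}.
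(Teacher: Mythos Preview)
Your proof is correct and follows the same overall strategy as the paper: apply Lemma~\ref{l:uno} with parameter $\eps/2$, then translate the resulting polyhedral chain $P'$ to achieve $\mu_P\perp\mu$, controlling the change in flat norm via the homotopy formula. The only difference lies in how the good translation is selected. The paper picks a direction $v\in\mathbb{S}^{d-1}$ not tangent to any simplex of $P'$, so that the translates $(\tau_{tv})_\sharp P'$ for $t\in[0,\lambda]$ carry mutually singular mass measures; since $\mu$ is finite on $K$, at most countably many of these uncountably many measures can fail to be singular to $\mu$, and one picks any of the remaining ones. Your Tonelli argument instead shows directly that $\mu(\supp(P')+v)=0$ for $\mathcal{L}^d$-a.e.\ $v\in B_\delta(0)$, which is a bit more direct, avoids the need to single out a transversal direction, and yields the stronger conclusion that almost every small translation works rather than merely some.
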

\begin{proof}
Apply Lemma \ref{l:uno} to obtain $P'\in\Po_k^G(K)$ with ${\rm{supp}}(P')\subset$ ${\rm{int}}(K)$, $R'\in\F_k^G(K)$, and $S'\in \F_{k+1}^G(K)$ such that $$T-P'=R'+\partial S',\quad\mass(R')+\mass(S')\leq\varepsilon/2,\quad\mass(P')\leq(1+\varepsilon/2)\mass(T).$$

For $w\in\R^d$ denote by $\tau_w$ the translation map $x\mapsto x+w$. Since ${\rm{supp}}(P')\subset$ ${\rm{int}}(K)$ there exists $\lambda>0$ such that for every $w\in\mathbb{S}^{d-1}$
it holds ${\rm{supp}}((\tau_{t w})_\sharp P')\subset(K)$
for every $t\in [0,\lambda]$.

We claim that there exists $v\in\mathbb{S}^{d-1}$ such that $\mathcal{H}^k(\tau_{s v}({\rm{supp}}(P))\cap \tau_{t v}({\rm{supp}}(P)))=0$ for every $s\neq t\in[0,\lambda]$. The validity of the claim would complete the proof due to the following facts:
\begin{itemize}
    \item for every $\lambda_0$ in $(0,\lambda]$ and for $t\in[0,\lambda_0]$ denoting $P_t:=(\tau_{t v})_\sharp P'$, the (uncountably many) measures $\mu_{P_t}$ are mutually singular, hence one of them must be singular with respect to $\mu$. We denote $P$ the corresponding current;
    \item $\tau_w$ is an isometry for every $w$ hence $\mass(P)=\mass(P')$;
    \item if $\lambda_0$ is chosen sufficiently small, then $\flat(P-P')\leq \varepsilon/2$, by means of the same argument used to prove \eqref{e:stimaflat1}, replacing $f_\lambda$ with $\tau_{tv}$.
\end{itemize}
Let us focus then on the proof of the claim. We can write $$P = \sum_{\ell=1}^{N} g_{\ell} \llbracket\sigma_{\ell}\rrbracket$$
and since $k<d$ we can choose $v\in\mathbb{S}^{d-1}$ which is not tangent to any of the $\sigma_\ell$'s. This ensures that if $\lambda$ is chosen sufficiently small, then $\Haus^k(\tau_{sv}\sigma_{\ell}\cap\tau_{tv}\sigma_m)=0$ for every $\ell,m\in\{1,\dots,N\}$ and for every $s,t\in[0,\lambda]$. 
\end{proof}

We next apply Corollary \ref{c:uno} to obtain the main ingredient for the proof of Theorem \ref{maincor}, namely the fact that for every flat chain $T$ with finite mass one can find a rectifiable current $R$ which is singular with respect to $T$, have the same boundary and roughly the same mass.

\begin{proposition}\label{main}
Let $T$ and $\mu$ be as in Corollary \ref{c:uno}. For every $\eps>0$ there exists $R\in\mathscr{R}_k^G(K)$ such that \begin{itemize}
\item[(i)]
$\bd R=\bd T$, 
\item[(ii)] $\mu$ and $\mu_{R}$ are mutually singular,
\item[(iii)]
$\M(R)\le (1+\eps)\, \M(T)$.
\end{itemize}
\end{proposition}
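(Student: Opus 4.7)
The plan is to iterate Corollary \ref{c:uno}: at each step I would extract from the residual flat chain a polyhedral piece whose mass measure is singular both to $\mu$ and to all previously extracted pieces, and then sum these pieces to obtain the desired rectifiable current $R$.

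More precisely, I would fix a sequence $(\varepsilon_n)_{n\geq 1}$ of positive reals with $\varepsilon_1<\varepsilon/2$, with $\sum_n\varepsilon_n<\infty$, and chosen so small that $(1+\varepsilon_1)\Mass(T)+\sum_{n\geq 2}(1+\varepsilon_n)\varepsilon_{n-1}\leq (1+\varepsilon)\Mass(T)$. Set $T_0:=T$, and inductively, given $T_{n-1}\in\F_k^G(K)$ with finite mass, apply Corollary \ref{c:uno} to $T_{n-1}$ with parameter $\varepsilon_n$ and Radon measure $\nu_n:=\mu+\sum_{j=1}^{n-1}\mu_{P_j}$. This produces $P_n\in\Po_k^G(K)$ with $\mu_{P_n}\perp\nu_n$, together with $R_n\in\F_k^G(K)$ and $S_n\in\F_{k+1}^G(K)$ satisfying
$$T_{n-1}-P_n=R_n+\partial S_n,\quad \Mass(R_n)+\Mass(S_n)\leq\varepsilon_n,\quad \Mass(P_n)\leq (1+\varepsilon_n)\Mass(T_{n-1}).$$
I then set $T_n:=R_n$, whose mass does not exceed $\varepsilon_n$, so the induction continues.

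Telescoping the relations yields $T-\sum_{j=1}^{n}P_j=R_n+\partial\sum_{j=1}^{n}S_j$. The inequality $\Mass(P_n)\leq(1+\varepsilon_n)\varepsilon_{n-1}$ for $n\geq 2$ makes $\sum_n\Mass(P_n)<\infty$, and $\sum_n\Mass(S_n)\leq\sum_n\varepsilon_n<\infty$ by construction, so the partial sums converge in mass to a rectifiable current $R\in\mathscr{R}_k^G(K)$ (since the mass-closure of polyhedral chains lies in $\mathscr{R}_k^G(K)$) and to a flat chain $S\in\F_{k+1}^G(K)$ of finite mass. Because $\Mass(R_n)\to 0$, passing to the limit in flat norm gives $T-R=\partial S$, hence $\partial R=\partial T$, which is (i); bound (iii) follows from $\Mass(R)\leq\sum_n\Mass(P_n)$ and the choice of $\varepsilon_n$.

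For (ii), the singularity $\mu_{P_n}\perp\nu_n$ implies in particular $\mu_{P_n}\perp\mu$, so there exist Borel sets $A_n$ with $\mu_{P_n}(K\setminus A_n)=0$ and $\mu(A_n)=0$. By subadditivity of mass, $\mu_R(E)\leq\sum_n\mu_{P_n}(E)$ for every Borel $E\subset K$, so $\mu_R$ is concentrated on the $\mu$-null set $\bigcup_n A_n$. The main delicate point I anticipate is tuning the $\varepsilon_n$ so that (iii) comes out sharp while simultaneously ensuring both that the iteration can be carried out ($\Mass(T_{n-1})<\infty$ at each step) and that the tail sums $\sum_{n\geq N}\Mass(P_n)$ and $\sum_{n\geq N}\Mass(S_n)$ vanish; once that bookkeeping is in place, the rectifiability of $R$, the boundary identity, and the singularity $\mu_R\perp\mu$ all follow essentially automatically.
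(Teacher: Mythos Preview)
Your proposal is correct and follows essentially the same iterative scheme as the paper: repeatedly apply Corollary~\ref{c:uno} to the residual flat chain, sum the resulting polyhedral pieces $P_n$ in mass to obtain $R$, and read off (i)--(iii) from the telescoping identity and the mass bounds. The only cosmetic differences are that you take $\nu_n=\mu+\sum_{j<n}\mu_{P_j}$ (the paper just uses $\mu$ at every step, and you never actually exploit the extra singularity to the earlier $P_j$'s), and you deduce $\partial R=\partial T$ via $T-R=\partial S$ whereas the paper argues directly that $\partial(\sum_{j\le n}P_j)\to\partial T$ in flat norm; both routes are equivalent.
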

\begin{proof}
We can assume $T\neq 0$ and $\varepsilon<1$, and choose a decreasing sequence $\varepsilon_n$ such that
\begin{equation}\label{e:sceltaepsili}
(1+\varepsilon_0)\Mass(T)+\sum_{n\in\N}\varepsilon_n\leq(1+\varepsilon)\Mass(T).    
\end{equation} 
Apply Corollary \ref{c:uno} with $\varepsilon:=\varepsilon_0$ to get $P_0\in\Po_k^G(K)$ with $\mu_{P_0}\perp\mu$, $R_0\in \F_k^G(K)$, and $S_0\in \F_{k+1}^G(K)$ such that 
\begin{equation}\label{e:iter1}
T-P_0=R_0+\partial S_0,\quad\mass(R_0)+\mass(S_0)\leq\varepsilon_0,\quad \mass(P_0)\leq(1+\varepsilon_0)\mass(T).    
\end{equation}
 Applying the boundary operator to the identity in \eqref{e:iter1}, we get 
\begin{equation}\label{e:uno}
\partial T=\partial P_0+\partial R_0\, .
\end{equation}

Now apply again Corollary \ref{c:uno} to $T:=R_0$, with $\varepsilon:=\varepsilon_1$ to get $P_1\in\Po_k^G(K)$, with $\mu_{P_1}\perp\mu$, $R_1\in \F_k^G(K)$, and $S_1\in \F_{k+1}^G(K)$ such that 
\begin{equation}\label{e:iter2}
R_0-P_1=R_1+\partial S_1,\quad\mass(R_1)+\mass(S_1)\leq\varepsilon_1,\quad \mass(P_1)\leq(1+\varepsilon_1)\mass(R_0)\leq(1+\varepsilon_1)\varepsilon_0.    
\end{equation}

Note that, by \eqref{e:uno} and applying the boundary operator to the identity in \eqref{e:iter2}, we have
\begin{equation*}
\partial T=\partial P_0+\partial P_1+\partial R_1\, .
\end{equation*}
Iterating the procedure, we get that $\partial T=\sum_{n\in\N}\partial P_n$. The series converges because the partial sums $\sum_{n=0}^mP_n$ converge in mass to a current $R\in\Rc_k(K;G)$, hence their boundaries converge flat, and the continuity of the boundary operator with respect to the flat norm implies that $\partial R=\partial T$. Moreover, since for every $n\in\N$ we have $\mu_{P_n}\perp\mu$, then also $\mu_{R}\perp\mu$. To obtain the estimate on $\mass(R)$ we observe that $\mass(P_0)\leq(1+\varepsilon_0)\mass(T)$, while for $n\geq 1$ it holds 
$$\mass(P_n)\leq(1+\varepsilon_n)\Mass(R_{n-1})\leq 2\varepsilon_{n-1}\, .$$
By \eqref{e:sceltaepsili}, we get that $\mass (R)\leq (1+\varepsilon)\mass(T)$.
\end{proof}
\begin{remark}{\rm
We actually proved a slightly stronger result, namely that $R$ can be chosen to be a limit in mass of polyhedral chains.}
\end{remark}

\begin{proof}[Proof of Theorem \ref{maincor}]

Let $R$ be obtained by Proposition \ref{main} with $\mu:=\mu_T$. Since $\mu_R\perp\mu_T$, then there exists a Borel set $F$ such that $\mu_T(F)=0$ and $R=R\trace F$. Hence, denoting $E:=\R^d\setminus F$ and $S=T-R$, we have $\partial S=0$ and $T=S\trace E$. The estimate on the mass of $S$ follows from the estimate on the mass of $R$ in Proposition \ref{main}.
\end{proof}

\section{Proof of Theorem \ref{t:char_bundle} and Theorem \ref{t:disint}}\label{s:real}
We begin with the following definition.
\begin{definition}\label{d:auxiliary}
{\rm Let $K\subset\R^d$ be a convex, compact set and let $\mu$ be a Radon measure on $K$. For every $k=1,\dots,d$ and for every point $x$ in the support of $\mu$, 
we denote by $V_k(\mu,x)$ the set 
of all $k$-vectors $v\in\Lambda_m(\R^d)$ for which there exists a $T$ in $\mathscr{N}_k(K)$ with $\bd T=0$ such that
\begin{equation*}
\label{e-tanbund}
\lim_{r\to 0} \frac{ \Mass((T-v\mu)\trace B(x,r)) }{ \mu(B(x,r)) }
= 0
\, .
\end{equation*}
We set $V_k(\mu,x):=\{0\}$
when $x$ does not belong to the support of $\mu$.}
\end{definition}
With the same argument of \cite[Lemma 6.9 ]{AM}, it is possible to prove that $V_k$ is universally measurable and in particular it coincides with a Borel map on a Borel set of full measure.

\begin{remark}
\rm{When $k=1$, the definition above coincides with the definition of \emph{auxiliary bundle} given in \cite[\S6.1]{AM}. This coincides almost everywhere with the so called \emph{decomposability bundle}, defined in terms of all the possibilities to write locally the measure $\mu$ as an integral of 1-rectifiable measures, see \cite[\S 2.6 and Theorem 6.4]{AM}.

It is a simple consequence of Theorem \ref{t:disint} that $V_{d-1}(\mu,x)$ coincides almost everywhere with a variant of the decomposability bundle defined replacing 1-rectifiable measures with $(d-1)$-rectifiable ones.

One could be tempted to conjecture that for every $k$ the bundle $V_k(\mu,x)$ coincides almost everywhere with a variant of the decomposability bundle obtained replacing 1-rectifiable measures with $k$-rectifiable ones. Interestingly, this conjecture is false, due to an example by Schioppa, see \cite{Schio}, motivated by a question of F. Morgan on the possibility to extend Theorem \ref{t:disint} to normal currents of general dimension, see \cite[Problem 3.8, pg. 446]{Open}.} 
\end{remark}

\begin{proof}[Proof of Theorem \ref{t:char_bundle}]
Following verbatim the proof of \cite{AM}, we obtain that (i) is equivalent to the following property, where we denote $K_\delta:=\{x\in\R^d:{\rm dist}(x,K)\leq \delta\}$.
\begin{itemize}
    \item [(ii')] there exists $T\in\mathscr{N}_k(K_\delta)$ such that $T=\tau\mu+\sigma$, where $\sigma$ and $\mu$ are mutually singular. 
\end{itemize}
On the other hand, by Theorem \ref{maincor}, (ii) and (ii') are equivalent.
\end{proof}

It follows immediately from Theorem \ref{t:char_bundle} that if $T\in\mathscr{F}_k(K)$ has finite mass, then $V_k(\mu_T,x)\neq \{0\}$ for $\mu_T$-a.e. $x$. It is not clear if the same holds true for flat chains with coefficients in any group $G$.
\begin{quest}\label{q:bundle}
{\rm Let $T\in\F_k^G(\R^d)$ for an arbitrary group $G$. Is it true that if $\mass(T)<\infty$ then $V_k(\mu_T,x)\neq 0$ for $\mu_T$-a.e. $x$?}
\end{quest}

We now switch our attention to codimension-1 real flat chains with finite mass. It is easy to deduce from Theorem \ref{maincor} that they can be written as integrals of rectifiable currents with integer coefficients, without loss of mass. To this purpose we recall the following definition, see \cite[\S 2.3]{AM}.

\subsection{Integration of measures}
\label{s-measint}
Let $I\subset\R$ be a compact interval and for every $t\in I$ 
let $\mu_t$ be a real- or vector-valued measure on $\R^d$ such that:
\begin{itemizeb}
\item[(a)]
for every Borel set $E$ in $\R^d$ the function $t\mapsto \mu_t(E)$ 
is measurable; 
\item[(b)]
$\int_I \Mass(\mu_t) \, dt <+\infty$.
\end{itemizeb}
Then we denote by $\int_I \mu_t\, dt$ the measure on 
$\R^d$ defined by
\[
{\textstyle \big[ \int_I \mu_t\, dt \big]}(E)
:= \int_I \mu_t(E) \, dt
\quad\text{for every Borel set $E$ in $\R^d$.}
\]
%Note that the assumption (a) is equivalent to say that $t\mapsto \mu_t$ is a measurable map from $I$ to the space of finite measures on $\R^d$ endowed with the weak* topology.

\begin{proof}[Proof of Theorem \ref{t:disint}]
Let $T'\in\mathscr{N}_{d-1}(K)$ and $E\subset K$ be the normal current and the set obtained applying Theorem \ref{maincor} with $\varepsilon=1$. Since $T'=\partial S$ for some $S\in\mathscr{R}_d(K)$ with the additional property that $S$ is represented by a $BV$ function, then \cite[Theorem 1.10]{Alb_rk1} implies that $T'$ admits a decomposition $T'=\int_IR'_t dt$ as in the statement of the theorem. The conclusion follows setting $R_t:=R'_t\trace E$ for every $t\in I$.
\end{proof}

A complete generalization of Theorem \ref{t:disint} for flat chians with coefficients in an arbitrary group $G$ is not feasible, for instance because it is not clear how to understand an integral of rectifiable $G$-currents, given that flat chains with coefficients in a general group $G$ are not defined by duality. Nonetheless, a partial generalization would be represented by an affirmative answer to the following question.
\begin{quest}\label{q:decomp}
{\rm Let $T\in\F_{d-1}^G(\R^d)$ for an arbitrary group $G$. Is it ture that if $\mass(T)<\infty$ then $\mu_T=\int_I\mu_t dt$ for a compact set $I$ and a family of $(d-1)$-rectifiable measures $(\mu_t)_{t\in I}$?}
\end{quest}

Theorem \ref{t:disint} also allows us to shed light on the structure of the map $V_k(\mu,\cdot)$ of Definition \ref{d:auxiliary}. To every $k$-vector $v$ on $\R^d$ one naturally associates a vector subspace ${\rm{span}}(v)\subset\R^d$, see \cite[\S 5.8]{AM}. We observe that in general $V_k(\mu,x)$ does not coincide $\mu$-a.e. with the space of all $k$-vectors whose span is contained in a certain subspace $V(x)\subset\R^d$, by means of the following example.
\begin{example}
{\rm An example announced by A. Mathe, see \cite{Mathe}, shows that there exists a measure $\mu$ on $\R^3$ such that ${\rm{dim}}(V(\mu,x))=2$, for $\mu$-a.e. $x$ and $\mu$ is supported on a purely $2$-unrectifiable set. By Theorem \ref{t:disint}, the latter property implies that $V_2(\mu,x)=0$, for $\mu$-a.e. $x$.

Let $\nu:=\mu_x\times\mu_y$ be a measure on $\R^6\simeq\R_x^3\times\R_y^3$ where $\mu_x$ and $\mu_y$ are two copies of the measure $\mu$ on $\R^3_x$ and $\R^3_y$ respectively. We can write $V(\mu_x,\cdot)={\rm{span}}(v_1,v_2)$ for some unit vector fields $v_1, v_2$ on $\R^3_x$ and $V(\mu_y,\cdot)={\rm{span}}(w_1,w_2)$ for $w_1, w_2$ on $\R^3_y$. With a small abuse of notation we use $v_i$ (resp. $w_i$), $i=1,2$, both for the vector field in $\R^3_x$ (resp. $\R_y^3$) and for the vector field $(v_i,0)$ (resp. $(0,w_i)$) in $\R^6$. 

By Theorem \ref{t:char_bundle} we have that $v_1\mu_x, v_2\mu_x, w_1\mu_y,$ and $w_2\mu_y$ are 1-dimensional real flat chains, so that in particular the cartesian products $(v_1\wedge w_1)\nu, (v_1\wedge w_2)\nu, (v_2\wedge w_1)\nu,$ and $(v_2\wedge w_2)\nu,$ are 2-dimensional real flat chains on $\R^6$, see \cite[\S 4.1.8]{Federer}. Hence by Theorem \ref{t:char_bundle} we have
$$\{v_1\wedge w_1, v_1\wedge w_2, v_2\wedge w_1, v_2\wedge w_2\}\subset N^2(\nu,\cdot).$$
Since for every $v,w$ it holds ${\rm{span}}\{v,w\}={\rm{span}}(v\wedge w)$, then, should $V_2(\nu,x)$ coincide $\nu$-a.e. with the space of all 2-vectors whose span is contained in a certain subspace $V(x)\subset \R^6$, we would have $V(x)\supset {\rm{span}}\{v_1(x),v_2(x),w_1(x),w_2(x)\}$, so that in particular $v_1(x)\wedge v_2(x)\in V_2(\nu,x)$ for $\nu$-a.e. $x$. Again, Theorem \ref{t:char_bundle} would imply that $(v_1\wedge v_2)\nu$ is a real flat chain and so it would be the push forward according to the projection on $\R_x^3$, namely the flat chain $(\pi_x)_\sharp(v_1\wedge v_2)\nu=(v_1\wedge v_2)\mu_x$, thus contradicting the fact that $v_2(\mu,x)=0$ for $\mu$-a.e. $x$.} 
\end{example}

\section{Lifting flat chains with finite mass}\label{s:lifting}
Let $G$ and $\tilde G$ be normed abelian groups and let $\phi:\tilde G\to G$ be a surjective homomorphism such that 
\begin{equation}\label{eq:const_p}
\|\phi(g)\|_G\leq C\|g\|_{\tilde G},
\end{equation}
and 
\begin{equation}\label{eq:const_p2}
\|g\|_G\geq C^{-1} \inf_{\tilde g\in \phi^{-1}(g)}\|\tilde g\|_{\tilde G},
\end{equation}
for some constant $C> 0$.

As a guiding example, one may consider the case $\tilde G=\R$ and $G=\mathbb{S}^1=\R/\Z$, being $\phi$ the projection on the quotient, $\|\cdot\|_{\tilde{G}}$ the Euclidean norm and $\|\cdot\|_{G}$ the quotient norm, i.e. 
\begin{equation}\label{e:normG}
\|\phi(\tilde g)\|_G:=\min\{{\rm{frac}}(\tilde g), 1-{\rm{frac}}(\tilde g)\}, 
\end{equation}
where ${\rm{frac}}(x):=x-\lfloor x \rfloor$ is the fractional part of $x$, and $\lfloor x\rfloor$ is the largest integer not greater than $x$.

For every convex, compact set $K\subset\R^d$ and for every $0\leq k\leq d$, the map $\phi$ induces a natural map $\pi:\Po_{k}^{\tilde G}(K)\to\Po_{k}^G(K)$, defined as follows. Given $P:=\sum_i\tilde g_i\llbracket\sigma_i\rrbracket$ as in \eqref{def:polyhedral} we define 
$$\pi(P):=\sum_i \phi(\tilde g_i)\llbracket\sigma_i\rrbracket.$$ 
Clearly $\pi$ is surjective and commutes with the boundary operator. Moreover by \eqref{eq:const_p} we get $\mass(\pi(P))\leq C\mass(P)$, for every $P\in\Po_{k}^{\tilde G}(K)$. Hence we deduce that $\flat(\pi(P))\leq C\flat(P)$, for every $P\in\Po_{k}^{\tilde G}(K)$. We conclude that $\pi$ extends to a surjective map (still denoted $\pi$) between $\F_k^{\tilde G}(K)$ and $\F_k^G(K)$ which is $C$-Lipschitz with respect to the flat norm $\flat$.\\

In this section we address the following question, which are natural generalizations of classical questions in the framework of flat chains modulo $p$, see \cite[4.2.26]{Federer}, \cite{Whitemod4} \cite{Young}, and \cite[Question 3.5 and Question 3.7]{MS}.

\begin{question}\label{q:lifting}
{\rm Is the restriction of the map $\pi$ surjective between $\{\tilde T\in\F_k^{\tilde G}(K):\mass(\tilde T)<\infty\}$ and $\{T\in\F_k^G(K):\mass(T)<\infty\}$?}
\end{question}
It follows immediately from \eqref{eq:const_p2} that the answer to Question \ref{q:lifting} restricted to the class of rectifiable currents is positive. In particular, we have the following
\begin{corollary}\label{c:rec}
The answer to Question \ref{q:lifting} is positive for $k=d$.
\end{corollary}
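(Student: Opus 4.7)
The plan is to reduce the lifting problem to an $L^1$--lifting problem for measurable $G$-valued functions. As noted in the introduction, every $d$-dimensional flat chain of finite mass in $\R^d$ is (represented by) an integrable function. Thus $T\in\F_d^G(K)$ with $\Mass(T)<\infty$ corresponds to a measurable $f:K\to G$ with $T=f\Leb^d$ and $\Mass(T)=\int_K\|f\|_G\,d\Leb^d$; likewise any $\tilde T\in\F_d^{\tilde G}(K)$ of finite mass corresponds to some $\tilde f\in L^1(K;\tilde G)$, and $\pi$ acts at this level as postcomposition with $\phi$. The task therefore reduces to: given $f\in L^1(K;G)$, produce $\tilde f\in L^1(K;\tilde G)$ with $\phi\circ\tilde f=f$ almost everywhere.

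I would carry this out by a telescoping argument. First, approximate $f$ in $L^1$-norm by simple $G$-valued functions $f_n$, chosen so that $\|f_{n+1}-f_n\|_{L^1}\leq 2^{-n}$ (after passing to a subsequence one can also arrange pointwise a.e.\ convergence). Each difference $f_{n+1}-f_n=\sum_i g_i\chi_{E_i}$ takes finitely many values, so \eqref{eq:const_p2} allows one to pick, via \emph{finitely many} choices, $\tilde g_i\in\phi^{-1}(g_i)$ with $\|\tilde g_i\|_{\tilde G}\leq 2C\|g_i\|_G$; this produces a simple $\tilde G$-valued function $h_n:=\sum_i\tilde g_i\chi_{E_i}$ lifting $f_{n+1}-f_n$ and satisfying $\|h_n\|_{L^1(\tilde G)}\leq 2C\cdot 2^{-n}$. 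Lifting $f_0$ similarly to some simple $\tilde f_0$, the series $\tilde f:=\tilde f_0+\sum_n h_n$ converges absolutely in $L^1(K;\tilde G)$, and the Lipschitzianity of $\phi$ (from \eqref{eq:const_p}) lets one pass $\phi$ through the series to obtain $\phi\circ\tilde f=f$ almost everywhere. The current $\tilde T:=\tilde f\Leb^d\in\F_d^{\tilde G}(K)$ is then a lift of $T$ with $\Mass(\tilde T)\leq 2C\,\Mass(T)<\infty$.

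I do not expect a genuine obstacle here: the $L^1$--representation is standard at top dimension, and pointwise lifting is reduced to finitely many selections at each step of the telescoping, so no measurable selection theorem is needed. The only minor point deserving care is ensuring that the telescoping produces a bona fide measurable lift of $f$ (rather than merely of some partial sum), which is handled by combining the absolute $L^1$ convergence of the series $\sum h_n$ with the a.e.\ pointwise convergence of the simple approximations $f_n$ and the continuity of $\phi$.
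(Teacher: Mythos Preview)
Your proposal is correct and follows essentially the same route as the paper. The paper's proof is the single sentence preceding the corollary: condition \eqref{eq:const_p2} immediately gives lifting for rectifiable currents, and top-dimensional flat chains of finite mass are rectifiable (indeed, represented by $L^1$ functions, as noted in the introduction). Your telescoping argument is simply a detailed implementation of that one-line observation in the $L^1$ setting. One minor slip: the final inequality $\Mass(\tilde T)\leq 2C\,\Mass(T)$ does not quite follow from the bounds you wrote (the contribution of $\tilde f_0$ and the tail $\sum_n 2C\cdot 2^{-n}$ do not combine to $2C\|f\|_{L^1}$ without further care), but finiteness of $\Mass(\tilde T)$ is clear, and that is all the corollary asserts.
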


Let us briefly explain why it is substantially harder to give a positive answer to Question \ref{q:lifting} in the class of flat chains than in the class of rectifiable currents. Given $T\in\mathscr{F}_k^G(K)$ with $\mass(T)<\infty$ one can apply Proposition \ref{p:somma} to write $T=R+\partial S$ with $R\in\mathscr{R}_k^G(K)$ and $S\in\mathscr{R}_{k+1}^G(K)$ with the additional property that $\mass(\partial S)<\infty$. By \eqref{eq:const_p2} there are $\tilde R\in\mathscr{R}_k^{\tilde G}(K)$ and $\tilde S\in\mathscr{R}_{k+1}^{\tilde G}(K)$ such that $\tilde T:=\tilde R + \partial \tilde S\in\mathscr{F}_k^{\tilde G}(K)$ satisfies $\pi(\tilde T)=T$, but it could happen that $\M(\partial\tilde S)=+\infty$, and hence $\mass(\tilde T)=+\infty$. In conclusion, given a current $S\in\mathscr{R}_{k+1}^G(K)$ with $\mass(\partial S)<\infty$, the challenge is to construct a current $\tilde Z\in\mathscr{R}_{k+1}^{\tilde G}(K)$ with $\mass (\partial \tilde Z)<\infty$ and $\pi(\partial\tilde Z)=\partial S$.\\

The possibility to give a positive answer to Question \ref{q:lifting} for $k<d$ is strictly related to the analogue in our setting of a classical question on the boundedness of the ratio between the minimal filling of a given integral boundary among integral and normal currents, see \cite[Problem 1.13, pg. 443]{Open}. We begin with the following definition.

\begin{definition}\label{d:br}
We say that an homeomorphism $\phi$ as in \eqref{eq:const_p} and \eqref{eq:const_p2} satisfies the \emph{bounded ratio property} $BR(k,D)$ for some $1\leq k\leq d-1$ and $D>0$ if the following holds.
For every $Q\in\Po_k^{\tilde G}(K)$ with $\pi(\partial Q)=0$ there exists $Q'\in\Po_k^{\tilde G}(K)$ with 
$$\pi(Q')=0,\quad \partial Q'=\partial Q,\quad\mbox{and}\quad\mass(Q')\leq D\mass (Q).$$
\end{definition}

We say that $G$ is \emph{locally compact} if for every $M>0$ the set $\{g\in G:\|g\|\leq M\}$ is compact. By \cite[Lemma 7.4]{Fleming}, if $G$ is locally compact, for every sequence $T_n\in\F_k^G(K)$ with $\sup\{\Mass(T_n)+\Mass(\partial T_n)\}<\infty$ there exists $T\in\F_k^G(K)$ and a subsequence $T_{n_i}$ such that $\Flat(T_{n_i}-T)\to 0$.
\begin{proposition}\label{p:main}
Assume that the homeomorphism $\phi$ satisfies \eqref{eq:const_p}, \eqref{eq:const_p2}, and the bounded ratio property $BR(k,D)$ and assume that $G$ is locally compact. Then for every $T\in\F_k^G(K)$ with $\Mass(T)<\infty$ and for every $\varepsilon>0$ there exists $\tilde T\in\F_k^{\tilde G}(K)$ with 
$$\pi(\tilde T)=T\quad\mbox{and}\quad\Mass(\tilde T)\leq C(2+2D)(1+\varepsilon)\Mass(T).$$ In particular the answer to Question \ref{q:lifting} is positive. 
\end{proposition}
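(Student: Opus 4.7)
The plan is to reduce to lifting a cycle via Theorem~\ref{maincor}, and then to perform the cycle lifting using the bounded ratio property $BR(k,D)$ together with a polyhedral approximation argument.

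Given $T$ as in the statement, I would apply Theorem~\ref{maincor} with a small parameter $\eps_1>0$ to obtain a cycle $T'\in\mathscr{N}_k^G(K)$ (i.e.\ $\bd T'=0$) and a Borel set $E\subset K$ such that $T=T'\trace E$ and $\M(T')\leq(2+\eps_1)\M(T)$. Any cycle lift $\tilde T'\in\F_k^{\tilde G}(K)$ of $T'$ then produces $\tilde T:=\tilde T'\trace E$, which is a lift of $T$ satisfying $\M(\tilde T)\leq\M(\tilde T')$. It therefore suffices to prove that every cycle $T'\in\mathscr{N}_k^G(K)$ admits a cycle lift $\tilde T'\in\F_k^{\tilde G}(K)$ with $\M(\tilde T')\leq C(1+D)(1+\eps_2)\M(T')$, since then
\[
\M(\tilde T)\leq C(1+D)(2+\eps_1)(1+\eps_2)\M(T)\leq C(2+2D)(1+\eps)\M(T)
\]
for $\eps_1,\eps_2$ chosen sufficiently small.

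For the cycle lifting, I would first approximate $T'$ in the flat norm by polyhedral cycles. The polyhedral approximation theorem for normal $G$-currents (whose proof uses the local compactness of $G$) yields $P_n'\in\Po_k^G(K)$ with $\flat(T'-P_n')\to 0$, $\M(P_n')\leq(1+\eps_n)\M(T')$ and $\M(\bd P_n')\to 0$; cone-filling $\bd P_n'$ by a polyhedral $k$-chain $c_n$ of mass $O(\diam(K))\M(\bd P_n')$ and setting $P_n:=P_n'-c_n$ yields polyhedral cycles with $P_n\to T'$ in flat and $\M(P_n)\leq(1+\eps_n')\M(T')$. I then lift each $P_n$ coefficient by coefficient via \eqref{eq:const_p2}, obtaining $\tilde P_n\in\Po_k^{\tilde G}(K)$ with $\pi(\tilde P_n)=P_n$ and $\M(\tilde P_n)\leq C(1+\eta)\M(P_n)$. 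Since $\pi(\bd\tilde P_n)=\bd P_n=0$, $BR(k,D)$ supplies $Q_n\in\Po_k^{\tilde G}(K)$ with $\pi(Q_n)=0$, $\bd Q_n=\bd\tilde P_n$, and $\M(Q_n)\leq D\M(\tilde P_n)$; setting $\tilde P_n^{\mathrm{cyc}}:=\tilde P_n-Q_n$ produces a polyhedral cycle lift of $P_n$ with $\M(\tilde P_n^{\mathrm{cyc}})\leq(1+D)C(1+\eta)(1+\eps_n')\M(T')$.

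To conclude, the sequence $(\tilde P_n^{\mathrm{cyc}})$ has uniformly bounded mass and identically vanishing boundary, so the flat compactness theorem (the analogue in $\tilde G$ of \cite[Lemma~7.4]{Fleming}) yields a subsequence converging in flat to some $\tilde T'\in\F_k^{\tilde G}(K)$. Continuity of $\pi$ and of $\bd$ in the flat norm give $\pi(\tilde T')=T'$ and $\bd\tilde T'=0$, while lower semicontinuity of mass provides the desired bound on $\M(\tilde T')$. The hardest step is the compactness argument: strictly speaking it requires $\tilde G$ to be locally compact, whereas the hypothesis only assumes local compactness of $G$. In the motivating example ($\tilde G=\R$, $G=\mathbb{S}^1$, $\ker\phi=\Z$) and more generally when $\ker\phi$ is discrete, local compactness of $\tilde G$ follows from that of $G$ and the proof goes through; a purely constructive alternative is to define $\tilde P_{n+1}^{\mathrm{cyc}}:=\tilde P_n^{\mathrm{cyc}}+\partial\tilde B_n$, where $\tilde B_n$ lifts a polyhedral $(k{+}1)$-filling $B_n$ of the cycle $P_{n+1}-P_n$ (which exists with small mass because $K$ is convex and $\flat(P_{n+1}-P_n)$ is small), which gives a flat Cauchy sequence but requires an additional argument to transfer the uniform mass bound to its limit.
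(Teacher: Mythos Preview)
Your route is genuinely different from the paper's. You first invoke Theorem~\ref{maincor} to reduce to lifting a \emph{cycle} $T'$, then approximate $T'$ by polyhedral cycles, lift each coefficientwise, correct via $BR(k,D)$, and pass to a flat limit by compactness. The paper never uses Theorem~\ref{maincor} here: it works directly with an arbitrary finite-mass $T$, chooses polyhedral $P_n\to T$ with $\M(P_n)\le(1+\eps/2)\M(T)$, telescopes $P_{n+1}-P_n=R_n+\partial S_n$ so that $P_n=\hat R_n+\partial\hat S_n$ with $\hat R_n$ mass-Cauchy, lifts the $\hat R_n$ part coefficientwise to a mass-convergent sequence $\tilde Z_n\to\tilde Z$, lifts $\partial\hat S_n$ coefficientwise to $\tilde X_n$ (with $\pi(\partial\tilde X_n)=0$), applies $BR(k,D)$ to obtain cycles $W_n:=\tilde X_n-\tilde Y_n$ of uniformly bounded mass projecting to $\partial\hat S_n$, and finally extracts a flat-convergent subsequence $W_n\to W$ and sets $\tilde T:=\tilde Z+W$. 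Your reduction to cycles makes the role of $BR(k,D)$ very transparent and yields the same constant; the paper's decomposition has the advantage of needing only the \emph{definition} of mass (no control on $\M(\partial P_n)$ is required, because the bound on $\M(\partial\hat S_n)$ comes for free from $\M(P_n)+\M(\hat R_n)$).

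Two small gaps in your write-up deserve attention. First, you implicitly use $\pi(\tilde T'\trace E)=\pi(\tilde T')\trace E$, which is true but is not stated in the paper and should be justified. Second, your ``polyhedral approximation theorem'' asserting simultaneously $\M(P_n')\le(1+\eps_n)\M(T')$ and $\M(\partial P_n')\to 0$ is stronger than what the definition of mass gives (the latter only yields $\flat(\partial P_n')\to 0$); the fix is easy---write $\partial P_n'=A_n+\partial B_n$ with $\M(A_n)+\M(B_n)$ small, cone $A_n$ and subtract $B_n$---but should be spelled out. As for the compactness step, you are right that it requires $\tilde G$ (not $G$) to be locally compact; the paper's own proof invokes exactly the same compactness in $\F_k^{\tilde G}(K)$, so this is not a defect of your approach relative to the paper's, and your remark about $\ker\phi$ discrete covers the applications in Section~\ref{s:lifting}.
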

\begin{proof}
    Let $P_n$ be a sequence in $\Po_{k}^G(K)$ be such that $\flat(T-P_n)\to 0$ as $n\to\infty$ and 
\begin{equation}\label{e:supmasse}
\sup_n\{\mass(P_n)\}=:M\leq \left(1+\frac\varepsilon 2\right) \Mass(T).
\end{equation}
Without loss of generality we may assume that $$\flat(P_{n+1}-P_n)\leq 2^{-n-1}\frac\varepsilon 4\Mass(T), \quad\mbox{for every $n\in\N$}.$$  Hence there exist $R_n\in\Po_{k}^G(K)$ and $S_n\in\Po_{k+1}^G(K)$ be such that 
\begin{equation}\label{e:usual}
P_{n+1}-P_n=R_n+\partial S_n    
\end{equation}
and 
\begin{equation}\label{e:usual2}
    \mass(R_n)+\mass(S_n)\leq 2\flat(P_{n+1}-P_n)\leq 2^{-n-2}\varepsilon \Mass(T).
\end{equation} 
%In particular 
%\begin{equation}\label{e:masspartialsn}
 %   \Mass(\partial S_n)\leq\Mass(P_{n+1})+\Mass(P_n)+\Mass(R_n)\leq 2M+1,
%\end{equation}
%for every $n$.

%Let $\tilde R_n\in\Po_{0}^{\tilde G}(K)$ be the polyhedral $\tilde G$-chain defined replacing each multiplicity $g$ in $R_n$ with $\tilde g$ defined as in \eqref{e:g_tilde} for the choice $\theta=\frac{1}{2}$. We have $\mass(\tilde R_n)=\mass(R_n)$, hence the sequence $(\tilde Z_n)_{n\geq 1}$ defined by $\tilde Z_n:=\sum_{i=1}^n \tilde R_i$ is a Cauchy sequence (in mass). Denote by $\tilde Z$ its limit, and observe that $\mass(\tilde Z)\leq 1$.

For every $n\in\N$, denote
$$\hat R_n:=P_0+\sum_{j=0}^{n-1}R_j\quad \hat S_n:=\sum_{j=0}^{n-1}S_j$$
and observe that by \eqref{e:usual} we have $P_n=\hat R_n+\partial\hat S_n$ and by \eqref{e:supmasse} and \eqref{e:usual2}
\begin{equation*}\label{e:masstilder}
    \Mass(\hat R_n)\leq\Mass(P_0)+\frac\varepsilon 2\Mass(T)\leq (1+\varepsilon)\Mass(T)
\end{equation*}
so that
\begin{equation}\label{e:masstildesn}
    \Mass(\partial\hat S_n)\leq\Mass(P_n)+\Mass(\hat R_n)\leq(2+2\varepsilon)\Mass(T).
\end{equation}
Let $\tilde P_0$ and $\tilde R_n\in\Po_{k}^{\tilde G}(K)$ be the polyhedral $\tilde G$-chains defined replacing each multiplicity $g$ in $P_0$ and in $R_n$ respectively with an element $\tilde g$ such that $\phi(\tilde g)=g$ and $\|\tilde g\|\leq C \|g\|$, which exists by \eqref{eq:const_p2}. 
Observe that by \eqref{e:supmasse} we have $\Mass(\tilde P_0)\leq C(1+\varepsilon/2)\Mass(T)$ and moreover by \eqref{e:usual2}
$$\mass(\tilde R_n)\leq C\mass(R_n)\leq C2^{-n-2}\varepsilon \Mass(T).$$ 
Hence the sequence $\tilde Z_n:=\tilde P_0+\sum_{i=0}^{n-1} \tilde R_i$ is a Cauchy sequence (in mass) so it converges to a flat chain $\tilde Z$. Moreover 
\begin{equation}\label{e:zetan}
    \pi\tilde Z_n=\hat R_n\quad\mbox{and}\quad\mass(\tilde Z)\leq C(1+\varepsilon)\Mass(T).
\end{equation}
Similarly let $\tilde X_n\in\Po_{k}^{\tilde G}(K)$ be defined replacing each multiplicity $g$ in $\partial \hat S_n$ with $\tilde g$ as above, so that 
\begin{equation}\label{e:masssn}
\Mass(\tilde X_n)\leq C\Mass(\partial \hat S_n)    
\end{equation}
Observe that $\tilde X_n$ might fail to be a boundary, but we still have $$\pi(\partial \tilde X_n)=\partial(\pi (\tilde X_n))=\partial(\partial \hat S_n)=0.$$
Hence by Definition \ref{d:br} and combining \eqref{e:masstildesn} and \eqref{e:masssn}, there exists $\tilde Y_n\in\Po_{k}^{\tilde G}(K)$ with 
\begin{equation}\label{e:basta}
\pi(\tilde Y_n)=0, \quad\partial \tilde Y_n=\partial\tilde X_n,\quad\mbox{and}\quad \Mass(\tilde Y_n)\leq D\Mass(\tilde X_n)\leq CD(2+2\varepsilon)\Mass(T).    
\end{equation}

Setting $W_n:=\tilde X_n-\tilde Y_n$ and combining \eqref{e:masstildesn}, \eqref{e:masssn} and \eqref{e:basta} we obtain 
\begin{equation}\label{e:wn}
    \partial W_n=0,\quad\pi(W_n)=\partial\hat S_n\quad\mbox{and}\quad \Mass(W_n)\leq C(D+1)(2+2\varepsilon)\Mass(T).
\end{equation}

Since $G$ is locally compact, there exists $W\in\F_k^{\tilde G}(K)$ such that, up to subsequences, $\Flat(W_n-W)\to 0$ and 
\begin{equation}\label{e:basta2}
    \Mass(W)\leq C(D+1)(2+2\varepsilon)\Mass(T).
\end{equation}
Eventually, setting $\tilde T:=\tilde Z+\tilde W$ and combining \eqref{e:zetan} and \eqref{e:wn}, the continuity of $\pi$ wrt the flat norm and the fact that $P_n=\hat R_n+\partial\hat S_n$ implies that $\pi(\tilde T)=T$ and moreover the estimates in \eqref{e:zetan} and \eqref{e:basta2} yield $$\Mass(\tilde T)\leq C(2+2D)(1+\varepsilon)\Mass(T).$$
\end{proof}

In the sequel we collect some partial answers to Question \ref{q:lifting}, which allow to answer positively to Question \ref{q:bundle} and to Question \ref{q:decomp} in some cases of interest.
We restrict to the guiding example mentioned above, namely we make the following \\

%\begin{proof}
%{\color{red} (questa dimostrazione si può omettere, usando che una corrente $d$-flat in $\R^d$ e' rettificabile) }

%Observe that, by \eqref{eq:const_p2}, for every $P\in\Po_{d}^G(K)$ there exists $\tilde P\in\Po_{d}^{\tilde G}(K)$ with 
%\begin{equation}\label{e:basic lifting}
 %   \pi(\tilde P)= P\quad \mbox{and} \quad \mass(\tilde P)\leq C\mass(P).
%\end{equation}
%Now let $T\in\F_d^G(K)$ and let $(P_n)_{n\geq 1}\subset\Po_{d}^G(K)$ be such that $\flat(P_n-T)\to 0$ as $n\to\infty$ and $\mass(P_n)\leq\mass(T)+1$, for every $n$. Up to subsequences we may assume that, denoting $Q_n:=P_{n+1}-P_n$, we have $\flat(Q_n)\leq 2^{-n}$ for every $n$ and since $Q_n$ are $d$-dimensional chains in $\R^d$, we have $\flat(Q_n)=\mass(Q_n)$. By \eqref{e:basic lifting}, for every $n$ we can take a chain $\tilde Q_n\in\Po_{d}^{\tilde G}(K)$ with $\pi(\tilde Q_n)= Q_n$ and $\mass(\tilde Q_n)\leq C\mass(Q_n)$. We deduce that for every $n$ the chain $R_n:=\tilde P_1 + \sum_{i=1}^n\tilde Q_n$ satisfies $\pi(R_n)=P_n$ for every $n$ and 
%$$\mass(R_n)\leq \mass(\tilde P_1) + \sum_{i=1}^n\mass(\tilde Q_n)\leq C\mass(P_1) + C \sum_{i=1}^n\mass(Q_n)\leq C(\mass(T)+2).$$
%Since the sequence $R_n$ is Cauchy w.r.t. $\flat$ and since the map $\pi$ is Lipschitz w.r.t. $\flat$, then the flat $\tilde{G}$-chain $\tilde T:=\flat-\lim_{n\to\infty}R_n$ satisfies $\pi(\tilde T)=T$ and by the lower semicontinuity of the mass we have $\mass(\tilde T)\leq C(\mass(T)+2)$.
%\end{proof}

\begin{assumption}\label{assumption}
{\rm Let $\tilde G:=\R$, $G:=\mathbb{S}^1=\R/\Z$, and $\phi:\R\to\R/\Z$ is the projection on the quotient. We let $\|\cdot\|_{\tilde G}$ be the Euclidean norm on $\R$ and $\|\cdot\|_{G}$ the induced norm on $\mathbb{S}^1$, see \eqref{e:normG}.}
\end{assumption}
%In order to answer positively Question \ref{q:lifting} for $k=0$, we need the following variant of the bounded ratio property of Definition \ref{d:br}.
%\begin{lemma}\label{l:br0}
 %   Let $G$ and $\tilde G$ be as in Assumption \ref{assumption}. Then for every $Q\in\Po_0^{\tilde{G}}(K)$ such that $\pi(Q)$ is a boundary, there exists $Q'\in\Po_0^{\tilde{G}}(K)$ with 
  %  $$\pi(Q')=0,\quad \mbox{and}\quad\mass(Q')\leq D\mass (Q)$$
%such that $Q'-Q$ is a boundary 
%\end{lemma}
%\begin{proof}
    %\end{proof}
\begin{theorem}\label{t:lifting0dim}
    Let $G$ and $\tilde G$ be as in Assumption \ref{assumption} and let $k=0$. Then the answer to Question \ref{q:lifting} is positive.
\end{theorem}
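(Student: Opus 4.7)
The plan is to produce $\tilde T$ as a weak-$\ast$ limit of natural section-lifts of polyhedral approximations of $T$. Three ingredients specific to $k=0$ make this work: (a) $0$-dimensional real flat chains of finite mass coincide with finite signed Borel measures on $K$ (see \S\ref{s:res-mass}), so Banach-Alaoglu applies; (b) on the compact set $K$, weak-$\ast$ convergence of signed measures with bounded total variation upgrades to flat-norm convergence via Kantorovich-Rubinstein duality; and (c) $\pi$ is Lipschitz with respect to the flat norm. Together these let us verify $\pi(\tilde T)=T$.

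Concretely, fix a Borel section $s\colon\Sf^1\to[-1/2,1/2]$ of $\phi$ with $|s(g)|=\|g\|_{\Sf^1}$ for every $g\in\Sf^1$ (for instance, take $s(g)$ to be the representative in $(-1/2,1/2]$). This induces a (non-linear) lift of polyhedral $\Sf^1$-chains: for $P=\sum_i g_i\llbracket x_i\rrbracket\in\Po_0^{\Sf^1}(K)$ written with distinct points $x_i$, set $\tilde P:=\sum_i s(g_i)\llbracket x_i\rrbracket\in\Po_0^{\R}(K)$; then $\pi(\tilde P)=P$ and $\mass(\tilde P)=\mass(P)$. By the relaxation definition of $\mass$ on $\F_0^{\Sf^1}(K)$, pick polyhedral chains $P_n$ with $\flat(P_n-T)\to 0$ and $\mass(P_n)\to\mass(T)$, and form the lifts $\tilde P_n$. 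These are finite signed Borel measures on $K$ with $\sup_n\mass(\tilde P_n)=\sup_n\mass(P_n)<\infty$; since $C(K)$ is separable, Banach-Alaoglu extracts a subsequence converging weakly-$\ast$ to some finite signed measure $\tilde T\in\F_0^{\R}(K)$, for which $\mass(\tilde T)\le\liminf_n\mass(\tilde P_n)=\mass(T)$.

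The central step is to upgrade weak-$\ast$ to flat convergence. For $0$-chains, $\flat(\nu)$ equals, by Kantorovich-Rubinstein duality, the supremum of $\int f\,d\nu$ over $f\colon K\to\R$ with $\|f\|_\infty\le 1$ and $\mathrm{Lip}(f)\le 1$. Such test functions form a uniformly bounded equicontinuous family on the compact $K$, hence a relatively compact subset of $C(K)$ by Arzel\`a-Ascoli; on any compact subset of $C(K)$, weak-$\ast$ convergence of a sequence of signed measures with uniformly bounded total variation is automatically uniform. We deduce $\flat(\tilde P_n-\tilde T)\to 0$. Since $\pi$ is $C$-Lipschitz with respect to $\flat$, $P_n=\pi(\tilde P_n)\to\pi(\tilde T)$ in flat norm, while at the same time $P_n\to T$ in flat norm by construction. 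By uniqueness of the flat limit, $\pi(\tilde T)=T$, as required.

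The main obstacle is exactly this weak-$\ast$-to-flat upgrade, and it is precisely why the $k=0$ case is more accessible than the higher-dimensional ones. It crucially relies on the flat norm being dual to bounded Lipschitz functions when the chain dimension is zero, a feature absent for $k\ge 1$, together with the compactness of $K$ via Arzel\`a-Ascoli. For $k\ge 1$ one cannot hope for this direct route and must instead invoke the abstract machinery of Proposition~\ref{p:main} along with a verification of the bounded ratio property $BR(k,D)$.
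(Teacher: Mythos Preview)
Your argument is correct and takes a genuinely different, more direct route than the paper's proof. The paper reuses the telescoping framework of Proposition~\ref{p:main}: it splits $P_n=\hat R_n+\partial\hat S_n$, lifts the two pieces separately to $\tilde Z_n$ and $\tilde X_n$, and then observes that for $k=0$ the auxiliary chains $\tilde Y_n$ from the bounded-ratio step are unnecessary because $\partial\tilde X_n=0$ automatically, so Fleming's compactness lemma applies directly to $W_n:=\tilde X_n$. You instead bypass the telescoping entirely: you lift each $P_n$ in one shot via a Borel section, identify $0$-dimensional real flat chains of finite mass with signed measures, extract a weak-$\ast$ limit by Banach--Alaoglu, and then upgrade weak-$\ast$ to flat convergence through the bounded-Lipschitz (Kantorovich--Rubinstein) description of $\Flat$ together with Arzel\`a--Ascoli. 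Both proofs hinge on the same structural fact---$0$-chains are cycles, so compactness requires only a mass bound---but your version makes this explicit in measure-theoretic language and is self-contained, while the paper's version is shorter because it leans on the machinery already built for Proposition~\ref{p:main} and on Fleming's compactness as a black box. Your closing remark correctly isolates why this shortcut is unavailable for $k\ge 1$.
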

\begin{proof}
    We can argue as in the proof of Proposition \ref{p:main} observing that we do not need the property of Definition \ref{d:br}. Indeed we do not need the chains $\tilde Y_n$ in \eqref{e:basta} and we can take $W_n:=\tilde X_n$, because the latter sequence is already $\Flat$-precompact. 
\end{proof}

The next lemma provides the main tool to answer Question \ref{q:lifting} for $k=d-1$. The proof uses a strategy already employed in \cite{DavIgn, Merl}.

\begin{lemma}\label{l:lifting_d-1}
Let $G$ and $\tilde G$ be as in Assumption \ref{assumption}. Let $P\in\Po_{d}^G(K)$. Then there exists $\tilde P\in\Po_{d}^{\tilde G}(K)$ such that 
$$\pi(\tilde P)=P, \quad\mass(\tilde P)\leq 3 \mass(P), \quad {\mbox and }\quad \mass(\partial\tilde P)\leq 5 \mass(\partial P).$$
\end{lemma}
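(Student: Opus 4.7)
The approach is to lift $P$ simplex by simplex using a one-parameter family of integer corrections. Writing $P = \sum_i g_i \llbracket \sigma_i \rrbracket$ with $g_i \in \mathbb{S}^1$ and letting $v_i \in [0, 1)$ denote the canonical representative of $g_i$, for every parameter $t \in [0, 1)$ I set
\[
\tilde P_t := \sum_i \tilde g_i(t)\, \llbracket \sigma_i \rrbracket, \qquad \tilde g_i(t) := v_i - \mathbf{1}_{\{v_i > t\}} \in (t-1, t].
\]
Then $\tilde P_t \in \Po_d^\R(K)$ and $\pi(\tilde P_t) = P$ by construction, regardless of $t$. The idea is that for a good $t^* \in [0, 1)$, the polyhedral chain $\tilde P := \tilde P_{t^*}$ satisfies both required mass bounds.

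The first step is an averaging estimate. A direct computation, carried out separately on each simplex and each face, gives
\[
\int_0^1 |\tilde g_i(t)|\, dt = 2 v_i(1-v_i) \le 2 \|g_i\|_G
\]
for every simplex $\sigma_i$, and
\[
\int_0^1 |\tilde g_j(t) - \tilde g_i(t)|\, dt = 2 \delta_F(1-\delta_F) \le 2\|g_j - g_i\|_G
\]
for every face $F$ shared by two simplices $\sigma_i, \sigma_j$ (or by a simplex $\sigma_i$ and the exterior of $K$, where we conventionally set $v = 0$), with $\delta_F := |v_j - v_i| \in [0, 1)$. Multiplying respectively by $\Haus^d(\sigma_i)$ and by $\Haus^{d-1}(F)$ and summing yields the global bounds
\[
\int_0^1 \mass(\tilde P_t)\, dt \le 2\mass(P), \qquad \int_0^1 \mass(\partial \tilde P_t)\, dt \le 2\mass(\partial P).
\]

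The last step is to pick $t^* \in [0, 1)$ so that the two targets $\mass(\tilde P_{t^*}) \le 3\mass(P)$ and $\mass(\partial \tilde P_{t^*}) \le 5\mass(\partial P)$ hold simultaneously. This is where the main obstacle lies: a naive Markov / union bound on the two averaged inequalities leaves an exceptional set of measure at most $2/3 + 2/5 = 16/15 > 1$, which is not quite conclusive. The plan is to close the gap by first restricting to the subset $A := \{t : \mass(\partial \tilde P_t) \le 5\mass(\partial P)\}$, which has measure at least $3/5$, and then exploiting the slack in the pointwise inequality $2v(1-v) \le 2\min(v, 1-v)$ (tight only as $v \to 0$ or $1$) to sharpen the averaged mass estimate on $A$ and locate $t^* \in A$ with $\mass(\tilde P_{t^*}) \le 3\mass(P)$. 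An alternative route, closer to the Dávila–Ignat and Merlet arguments cited in the statement, is to combine the sliding with an auxiliary integer-correction step on the ``transition'' faces — those where $|\tilde g_j(t) - \tilde g_i(t)| > 1/2$ — which reduces the boundary mass at the price of an explicit, controlled increase in the interior mass. Either way, once $t^*$ is identified, setting $\tilde P := \tilde P_{t^*}$ concludes the proof.
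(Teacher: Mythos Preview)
Your one-parameter family $\tilde P_t$ is exactly the one the paper uses, and your averaging computations over $[0,1)$ are correct. The gap is in the last step: you correctly identify that a naive union bound fails, but neither of the two routes you sketch is carried out. The first one in particular (``exploit the slack in $2v(1-v)\le 2\min(v,1-v)$ to sharpen the averaged mass estimate on $A$'') does not work as stated: that slack is a feature of each individual coefficient $v_i$, while the set $A$ is determined by the \emph{boundary} multiplicities, so there is no mechanism linking the two. The second route is left entirely at the level of a slogan.

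The paper's resolution is much simpler than either of your proposed fixes and requires no extra ingredient: restrict the parameter to $t\in(\tfrac14,\tfrac34)$. On this range the bound $|\tilde g_i(t)|\le 3\|g_i\|_G$ holds \emph{pointwise} for every $i$. Indeed, $|\tilde g_i(t)|\in\{v_i,1-v_i\}$, and the only values of $t$ for which $|\tilde g_i(t)|>3\min(v_i,1-v_i)$ are $t\in[0,v_i)$ when $v_i<\tfrac14$, or $t\in[v_i,1)$ when $v_i>\tfrac34$; both sets are disjoint from $(\tfrac14,\tfrac34)$. Hence $\mass(\tilde P_t)\le 3\,\mass(P)$ for \emph{every} $t\in(\tfrac14,\tfrac34)$, and only the boundary mass needs to be selected by averaging. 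On each face, the set of $t\in(\tfrac14,\tfrac34)$ where $|\tilde g_j(t)-\tilde g_i(t)|$ fails to be $\le 3\|g_j-g_i\|_G$ has measure at most $\|g_j-g_i\|_G$, and on that set the value is at most $1$; this gives
\[
\int_{1/4}^{3/4}\mass(\partial\tilde P_t)\,dt\le \tfrac32\,\mass(\partial P)+\mass(\partial P)=\tfrac52\,\mass(\partial P),
\]
so some $t^*\in(\tfrac14,\tfrac34)$ satisfies $\mass(\partial\tilde P_{t^*})\le 5\,\mass(\partial P)$, and for that same $t^*$ the interior bound is already in hand.
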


\begin{remark}
{\rm We note that Lemma \ref{l:lifting_d-1} implies the validity of Statement $\mathcal{P}_m$ of \cite[\S 4.1]{MS} for $m=n$ with a constant $C$ which is independent of $p$. This can be obtained simply dividing by $p$ the multiplicities of the current $P$ in such statement, so that the $\modp$ equivalence relation on $\Z$ turns into the $\mod(1)$ equivalent relation on $\Z/p\subset\R$. In turn, Statement $\mathcal{S}_m$ of \cite[\S 4.1]{MS} follows from Statement $\mathcal{P}_m$ with the same constant, see \cite[Remark 4.2]{MS} and the two statements imply positive answers to \cite[Question 3.5 and Question 3.7]{MS}. 
}
\end{remark}

\begin{proof}[Proof of Lemma \ref{l:lifting_d-1}]
Write $P$ in the form $P:=\sum_i g_i\llbracket\sigma_i\rrbracket$, for $g_i\in\mathbb{S}^1$ as in \eqref{def:polyhedral} the $\sigma_i$ being endowed with the standard orientation of $\R^d$. Accordingly we can write $\partial P:=\sum_j (g_j^+-g_j^-) \llbracket S_j\rrbracket$, where $S_j$ are $(d-1)$-simplexes and $g_j^\pm$ is the multiplicity of the $d$-simplex $\sigma_j^\pm$ appearing in the decomposition of $P$ having $S_j$ in the boundary and such that the positive orientation on $\sigma_j^\pm$ induces on $S_j$ a positive (resp. negative) orientation (we simply set $g_j^\pm=0$ if there is no such $d$-simplex $\sigma_j^\pm$). Let us now fix $\theta\in(\frac{1}{4},\frac{3}{4})$ to be chosen later. For every $g\in[0,1)$ let us denote
\begin{equation}\label{e:g_tilde}
    \tilde g=\tilde g(\theta):=
    \begin{cases}
      g, & \text{if}\ g\leq \theta; \\
      g-1, & \text{otherwise.}
    \end{cases}
  \end{equation}
Consider now the polyhedral $\tilde G$-chain $\tilde P:=\sum_i \tilde g_i\llbracket\sigma_i\rrbracket$. Obviously $\pi(\tilde P)=P$. Since $\|\tilde g\|_{\tilde G}\leq 3\|g\|_G$ for every $g\in G$, then we have
$$\mass(\tilde P)\leq 3\mass(P).$$
Moreover, denoting $S:=\bigcup_j S_j$ we have 
$$\mass(\partial P)=\int_S \|g^+-g^-\|_G d\Haus^{d-1}=\sum_j \|g_j^+-g_j^-\|_G \Haus^{d-1}(S_j)$$
and similarly
$$\mass(\partial \tilde P)=\int_S \|\tilde g^+-\tilde g^-\|_{\tilde G} d\Haus^{d-1}=\sum_j \|\tilde g_j^+-\tilde g_j^-\|_{\tilde G} \Haus^{d-1}(S_j).$$
Recalling that the definition of $\tilde g$ (and henceforth of $\tilde P$) depends on $\theta$, we encode this dependence in the notations $\tilde g_\theta^\pm$ and $\tilde P_\theta$. Integrating in $\theta\in[\frac{1}{4},\frac{3}{4}]$, we get
$$\int_{\frac{1}{4}}^{\frac{3}{4}}\mass(\partial \tilde P_\theta)d\theta=\int_{\frac{1}{4}}^{\frac{3}{4}}\int_S \|\tilde g^+_\theta-\tilde g^-_\theta\|_{\tilde G} d\Haus^{d-1} d\theta=\int_S\int_{\frac{1}{4}}^{\frac{3}{4}} \|\tilde g^+_\theta-\tilde g^-_\theta\|_{\tilde G} d\theta d\Haus^{d-1}.$$
Observe that if $\theta$ does not belong to the open segment with extremes $g^-$ and  $g^+$ (whose length is $\|g^+-g^-\|_G$), we have 
$$\|\tilde g^+_\theta-\tilde g^-_\theta\|_{\tilde G}=\|g^+-g^-\|_{\tilde G}\leq 3 \|g^+-g^-\|_G,$$
and otherwise we have trivially
$$\|\tilde g^+_\theta-\tilde g^-_\theta\|_{\tilde G}\leq 1.$$
Hence we deduce that 
$$\int_{\frac{1}{4}}^{\frac{3}{4}}\mass(\partial \tilde P_\theta)d\theta\leq \int_S\int_{\frac{1}{4}}^{\frac{3}{4}} 3\|g^+-g^-\|_G d\theta d\Haus^{d-1} + \int_S \|g^+-g^-\|_G\cdot 1 d\Haus^{d-1}=\frac{5}{2}\mass(\partial P),$$
which implies that there exists $\theta\in(\frac{1}{4},\frac{3}{4})$ such that $\mass(\partial\tilde P_\theta)\leq 5 \mass(\partial P)$.
\end{proof}

\begin{theorem}\label{t:lifting_d-1}
Let $G$ and $\tilde G$ be as in Assumption \ref{assumption} and let $k=d-1$. Then the answer to Question \ref{q:lifting} is positive.
\end{theorem}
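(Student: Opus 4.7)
The strategy is to imitate the proof of Proposition~\ref{p:main}, using Lemma~\ref{l:lifting_d-1} to sidestep the bounded ratio property in the case $k=d-1$. I would begin by reproducing verbatim the first part of that proof: choose $P_n \in \Po_{d-1}^G(K)$ approximating $T$ in flat norm with $\mass(P_n) \leq (1+\varepsilon/2)\Mass(T)$, extract decompositions $P_{n+1}-P_n = R_n + \partial S_n$ with $R_n \in \Po_{d-1}^G(K)$ and $S_n \in \Po_d^G(K)$ having $\mass(R_n)+\mass(S_n) \leq 2^{-n-2}\varepsilon\Mass(T)$, and form the partial sums $\hat R_n$ and $\hat S_n$, which satisfy $P_n = \hat R_n + \partial \hat S_n$ together with the estimate $\mass(\partial \hat S_n) \leq (2+2\varepsilon)\Mass(T)$ as in \eqref{e:masstildesn}. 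The elementwise lift via \eqref{eq:const_p2} then gives $\tilde R_n \in \Po_{d-1}^{\tilde G}(K)$ and $\tilde P_0$ whose partial sums $\tilde Z_n := \tilde P_0 + \sum_{j<n} \tilde R_j$ form a Cauchy sequence in mass; its limit $\tilde Z \in \F_{d-1}^{\tilde G}(K)$ satisfies $\pi(\tilde Z) = R := \lim_n \hat R_n$ and $\mass(\tilde Z) \leq C(1+\varepsilon)\Mass(T)$.

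The only point where the proof of Proposition~\ref{p:main} invokes $BR(k,D)$ is the construction of a cycle $W_n \in \Po_k^{\tilde G}(K)$ with $\pi(W_n) = \partial \hat S_n$ and $\mass(W_n)$ uniformly bounded. For $k=d-1$ this can be done directly: apply Lemma~\ref{l:lifting_d-1} to $\hat S_n \in \Po_d^G(K)$ to obtain $\tilde{\hat S}_n \in \Po_d^{\tilde G}(K)$ with $\pi(\tilde{\hat S}_n) = \hat S_n$ and $\mass(\partial \tilde{\hat S}_n) \leq 5\mass(\partial \hat S_n)$. Setting $W_n := \partial \tilde{\hat S}_n$ automatically yields $\partial W_n = 0$ and $\pi(W_n) = \partial \pi(\tilde{\hat S}_n) = \partial \hat S_n$, while the bound on $\mass(\partial \hat S_n)$ recalled above gives the uniform estimate $\mass(W_n) \leq 5(2+2\varepsilon)\Mass(T)$. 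This is the one real idea of the proof.

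From this point on the argument is identical to that of Proposition~\ref{p:main}. Since $\tilde G = \R$ is locally compact in the sense required before Proposition~\ref{p:main}, the compactness result \cite[Lemma 7.4]{Fleming} extracts from $\{W_n\}$ a subsequence converging in flat norm to some $W \in \F_{d-1}^{\tilde G}(K)$, with $\mass(W) \leq 5(2+2\varepsilon)\Mass(T)$ by lower semicontinuity of mass. The $C$-Lipschitz continuity of $\pi$ with respect to $\flat$, combined with the mass-convergence $\hat S_n \to S$ (hence $\partial \hat S_n \to \partial S$ in flat norm), yields $\pi(W) = \partial S$. The candidate lift $\tilde T := \tilde Z + W \in \F_{d-1}^{\tilde G}(K)$ then satisfies $\pi(\tilde T) = R + \partial S = T$, where the last identity is obtained by passing $P_n = \hat R_n + \partial \hat S_n$ to the flat-norm limit, and $\Mass(\tilde T) \leq \bigl(C(1+\varepsilon) + 5(2+2\varepsilon)\bigr)\Mass(T)$. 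I do not foresee any serious obstacle beyond bookkeeping the limits; the conceptual difficulty of avoiding $BR(d-1,D)$ is entirely resolved by Lemma~\ref{l:lifting_d-1}, whose proof crucially uses the codimension-one structure.
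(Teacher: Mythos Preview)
Your proof is correct. Both your argument and the paper's rely on the same two ingredients---Lemma~\ref{l:lifting_d-1} and the machinery of Proposition~\ref{p:main}---but they are packaged differently. The paper first establishes the bounded ratio property $BR(d-1,6)$: given $Q\in\Po_{d-1}^{\tilde G}(K)$ with $\pi(\partial Q)=0$, it writes $\pi(Q)=\partial S$ via the cone construction, lifts $S$ to $\tilde S$ by Lemma~\ref{l:lifting_d-1}, and sets $Q':=Q-\partial\tilde S$; then it simply invokes Proposition~\ref{p:main}. You instead run the proof of Proposition~\ref{p:main} by hand and, at the one place where $BR$ would be called, apply Lemma~\ref{l:lifting_d-1} directly to $\hat S_n$ to produce the cycle $W_n=\partial\tilde{\hat S}_n$. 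Your route is slightly more economical---it avoids the cone construction and the auxiliary chain $\tilde X_n$, and yields a marginally better constant---while the paper's route has the advantage of isolating $BR(d-1,6)$ as a standalone structural fact. Conceptually they are the same proof.
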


\begin{proof}
We aim at proving the validity of the bounded ratio property $BR(d-1,D)$ of Definition \ref{d:br} with $D=6$, which suffices to prove Theorem \ref{t:lifting_d-1} thanks to Proposition \ref{p:main}. Consider $Q\in\Po_{d-1}^{\tilde G}(K)$ with $\pi(\partial Q)=0$. By the cone construction, we can write $\pi(Q)=\partial S$ for some $S\in\Po_{d}^{G}(K)$ with $$\Mass(S)\leq C(K)\Mass(\pi(Q))\leq C(K)\Mass(Q).$$ By Lemma \ref{l:lifting_d-1} there exist $\tilde S\in\Po_{d}^{\tilde G}(K)$ with 
$$\pi(\tilde S)=S,\quad\mass(\tilde S)\leq 3 \mass(S)\leq 3C(K)\mass(Q), \quad {\mbox and }\quad \mass(\partial\tilde S)\leq 5 \mass(\partial S)\leq 5\mass(Q).$$
Now set $Q':=Q-\partial \tilde S$. We have $\partial Q'=\partial Q$,
$$\pi(Q')=\pi(Q)-\pi(\partial\tilde S)=\pi(Q)-\partial S=0,$$
and 
$$\mass(Q')\leq \Mass(Q)+\Mass(\partial\tilde S)=6\Mass(Q).$$
\end{proof}

\begin{corollary}\label{c:disint_s1}
Let $G$ be as in Assumption \ref{assumption}. Then the answer to Question \ref{q:decomp} is affirmative. In particular the answer to Question \ref{q:bundle} is affirmative for $k=d-1$.
\end{corollary}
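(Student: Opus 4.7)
Let $T\in\F_{d-1}^{\mathbb{S}^1}(K)$ have $\Mass(T)<\infty$. By Theorem \ref{t:lifting_d-1}, there exists a real flat chain $\tilde T\in\F_{d-1}(K)$ with $\pi(\tilde T)=T$ and $\Mass(\tilde T)<\infty$. Since the restriction operator commutes with $\pi$ and $\pi$ is $C$-Lipschitz for the mass norm, $\mu_T(E)=\Mass(\pi(\tilde T\trace E))\leq C\mu_{\tilde T}(E)$ for every Borel $E\subset K$; hence $\mu_T\ll\mu_{\tilde T}$ and the Radon--Nikodym theorem yields a Borel density $g\colon K\to[0,C]$ with $\mu_T=g\mu_{\tilde T}$ and $g>0$ $\mu_T$-a.e.

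For Question \ref{q:decomp}, I apply Theorem \ref{t:disint} to $\tilde T$ to produce a compact interval $I$ and multiplicity-one rectifiable currents $\{R_t\}_{t\in I}$ with $\tilde T=\int_I R_t\,dt$ and $\Mass(\tilde T)=\int_I\Mass(R_t)\,dt$. The general inequality $\mu_{\tilde T}(E)\leq\int_I\mu_{R_t}(E)\,dt$ must be an equality on every Borel $E\subset K$ (else adding the inequalities on $E$ and $K\setminus E$ contradicts the mass-preservation identity), so $\mu_{\tilde T}=\int_I\mu_{R_t}\,dt$. Multiplying by $g$ and using Fubini gives $\mu_T=\int_I g\mu_{R_t}\,dt$; each summand $g\mu_{R_t}$ is absolutely continuous with respect to the $(d-1)$-rectifiable measure $\mu_{R_t}$, hence itself $(d-1)$-rectifiable, and properties (a)--(b) of \S\ref{s-measint} transfer from the decomposition of $\tilde T$. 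This answers Question \ref{q:decomp} in the affirmative.

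For Question \ref{q:bundle} at $k=d-1$, write $\tilde T=\tau\mu_{\tilde T}$ with $\tau$ a unit $(d-1)$-vector field and apply Theorem \ref{maincor} to $\tilde T$ to obtain $\tilde T'\in\mathscr{N}_{d-1}(K)$ with $\bd\tilde T'=0$ and a Borel $E\subset K$ such that $\tilde T=\tilde T'\trace E$. Setting $\tilde\sigma:=\tilde T'\trace(K\setminus E)$, we have $\tilde T'=\tau\mu_{\tilde T}+\tilde\sigma$ with $\mu_{\tilde\sigma}\perp\mu_{\tilde T}$, and hence $\mu_{\tilde\sigma}\perp\mu_T$. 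I claim that the field $v(x):=\tau(x)/g(x)$ lies in $V_{d-1}(\mu_T,x)$ for $\mu_T$-a.e. $x$, witnessed by $\tilde T'$ itself in Definition \ref{d:auxiliary}. At a common Lebesgue--Besicovitch point $x_0$ of $\tau$ and $g$ against $\mu_{\tilde T}$ with $g(x_0)>0$, one has
\[
(\tilde T'-v(x_0)\mu_T)\trace B(x_0,r)=(\tau-v(x_0)g)\mu_{\tilde T}\trace B(x_0,r)+\tilde\sigma\trace B(x_0,r),
\]
and the triangle inequality combined with the exact cancellation $\tau(x_0)-v(x_0)g(x_0)=0$ yields $\int_{B(x_0,r)}|\tau-v(x_0)g|\,d\mu_{\tilde T}=o(\mu_{\tilde T}(B(x_0,r)))$, while mutual singularity gives $\mu_{\tilde\sigma}(B(x_0,r))=o(\mu_T(B(x_0,r)))$. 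Dividing the resulting mass estimate by $\mu_T(B(x_0,r))$ and using $\mu_T(B(x_0,r))/\mu_{\tilde T}(B(x_0,r))\to g(x_0)>0$ makes the ratio in Definition \ref{d:auxiliary} vanish; since $v(x_0)\neq 0$, this gives $V_{d-1}(\mu_T,x_0)\neq\{0\}$.

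The conceptual content lies entirely in the lifting supplied by Theorem \ref{t:lifting_d-1}; the main technical subtlety is the third paragraph's Lebesgue-differentiation argument, in which the rescaling $v(x)=\tau(x)/g(x)$ is precisely the choice that simultaneously forces the rectifiable approximation error and the singular residue $\tilde\sigma$ to decay strictly faster than $\mu_T(B(x_0,r))$.
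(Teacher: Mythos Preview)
Your proof is correct and, for Question~\ref{q:decomp}, follows exactly the paper's line: lift $T$ to a real flat chain $\tilde T$ via Theorem~\ref{t:lifting_d-1}, apply Theorem~\ref{t:disint} to $\tilde T$, and use $\mu_T\ll\mu_{\tilde T}$ to push the decomposition down; you simply fill in details the paper leaves implicit (the commutation of $\pi$ with restriction and the mass-preservation argument giving $\mu_{\tilde T}=\int_I\mu_{R_t}\,dt$).

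For Question~\ref{q:bundle} your approach is genuinely more explicit than the paper's. The paper treats this as an unelaborated ``in particular'', presumably relying on the link between $V_{d-1}$ and the $(d-1)$-dimensional decomposability bundle mentioned after Definition~\ref{d:auxiliary}. You instead give a self-contained direct argument: apply Theorem~\ref{maincor} to the real lift $\tilde T$ to produce a boundaryless normal current $\tilde T'=\tau\mu_{\tilde T}+\tilde\sigma$, and then verify via Besicovitch differentiation that the rescaled vector $\tau(x)/g(x)$ satisfies Definition~\ref{d:auxiliary} with witness $\tilde T'$. This has the advantage of not passing through the decomposition of $\mu_T$ at all, and makes the ``in particular'' claim independent of the first part; the paper's route, by contrast, is shorter on the page but leaves to the reader the passage from the existence of a rectifiable disintegration of $\mu_T$ to the nontriviality of $V_{d-1}(\mu_T,\cdot)$.
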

\begin{proof}
Combining Theorem \ref{t:lifting_d-1} and Theorem \ref{t:disint} we infer the existence of a real flat chain with finite mass $\tilde T\in\mathscr{F}_{d-1}(K)$ such that $\pi(\tilde T)=T$ and a map $t\mapsto R_t$ as in \S \ref{s-measint} taking values in the space $\mathscr{R}_{d-1}(K)$ such that $\tilde T=\int_I R_t dt$ and $\mass(\tilde T)=\int_I \mass(R_t) dt$. Since $\mu_T$ is absolutely continuous w.r.t $\mu_{\tilde T}$ and for every $t\in I$ the measure $\mu_{\pi(R_t)}$ is absolutely continuous w.r.t. $\mu_{R_t}$, we deduce the desired conclusion.
\end{proof}

%\begin{lemma}\label{lemmakzero}
%Let $\tilde G$ be as in Assumption \ref{assumption}, and let $G$ be any group such that there exists a homomorphism $\phi:\tilde G\to G$ as in \eqref{eq:const_p} and \eqref{eq:const_p2}. Let $Q\in\Po_1^{\tilde G}(K)$ with $\pi(\partial Q)=0$. There exists $Q'\in\Po_1^{\tilde G}(K)$ with 
%$$\pi(Q')=0,\quad \partial Q'=\partial Q,\quad\mbox{and}\quad\mass(Q')\leq \mass (Q).$$
%\end{lemma}

\begin{theorem}\label{t:lifting_0}
Let $G$ and $\tilde G$ be as in Assumption \ref{assumption} and let $k=1$. Then the answer to Question \ref{q:lifting} is positive.
\end{theorem}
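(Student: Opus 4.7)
The plan is to verify the bounded ratio property $BR(1,1)$ of Definition \ref{d:br} and then invoke Proposition \ref{p:main}, exploiting that $\mathbb{S}^1$ is locally compact (the quotient norm on $\R/\Z$ is bounded by $1/2$, so every closed ball in $\mathbb{S}^1$ is compact).

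First I would observe that for any $Q \in \Po_1^{\R}(K)$ with $\pi(\partial Q) = 0$ the boundary $\partial Q$ necessarily has integer multiplicities. Writing $\partial Q = \sum_j h_j \delta_{x_j}$ with distinct $x_j \in K$ and $h_j \in \R$, the condition $\pi(\partial Q) = 0$ forces $\phi(h_j) = 0$, hence $h_j \in \Z$; moreover the $h_j$'s sum to $0$ since $Q$ is a finite combination of oriented segments.

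The main step is then a classical transport identity. Let $B$ be a $0$-dimensional integer polyhedral chain of total mass zero, and split $B = B^+ - B^-$ into its positive and negative parts. The infimum of $\Mass(Q')$ over \emph{integer} polyhedral $1$-chains $Q'$ with $\partial Q' = B$ coincides with the Wasserstein-$1$ distance $W_1(B^+, B^-)$, which is in turn the infimum of $\Mass(Q)$ over \emph{real} polyhedral $1$-chains with the same boundary. The real side follows from Kantorovich-Rubinstein duality against $1$-Lipschitz test functions (since $Q(df) = \int f\, dB$ and $|Q(df)| \leq \Mass(Q)$); the integer side follows from the integrality of the transportation linear program, and can be made concrete by splitting $B^+$ and $B^-$ into multisets $\{p_k\}$ and $\{q_k\}$ of unit atoms, picking a bijection $\sigma$ that minimizes $\sum_k |p_k - q_{\sigma(k)}|$, and checking that the integer polyhedral $1$-chain $\sum_k \llbracket q_{\sigma(k)}, p_k\rrbracket$ realizes $W_1(B^+, B^-)$ and has boundary $B$.

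Applying this to $B := \partial Q$ yields a chain $Q' \in \Po_1^{\Z}(K) \subset \Po_1^{\R}(K)$ with $\partial Q' = \partial Q$, $\Mass(Q') \leq \Mass(Q)$, and $\pi(Q') = 0$ (because its multiplicities are integers). This establishes $BR(1,1)$, and Proposition \ref{p:main} then produces a finite-mass lift $\tilde T \in \F_1^{\R}(K)$ of any given $T \in \F_1^{\mathbb{S}^1}(K)$ of finite mass. The only genuinely non-formal point is the coincidence of the optimal real and optimal integer fillings for integer $0$-boundaries; this is entirely standard discrete optimal transport, but worth recalling explicitly in the write-up.
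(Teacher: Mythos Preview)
Your proposal is correct and reaches the same conclusion as the paper---the bounded ratio property $BR(1,1)$, followed by an appeal to Proposition~\ref{p:main}---but the mechanism you use to obtain an integer filling is genuinely different. The paper argues by \emph{cycle cancellation}: it isolates the part $\hat Q$ of $Q$ carrying non-integer multiplicities, observes that because $\pi(\partial \hat Q)=0$ the support of $\hat Q$ must contain a loop, and then subtracts a suitable constant-multiplicity cycle $Z$ so as to push at least one multiplicity into $\Z$ without increasing mass; iterating yields $Q'$ with all multiplicities in $\Z$, $\partial Q'=\partial Q$ and $\Mass(Q')\le\Mass(Q)$. Your route instead passes through discrete optimal transport: the Kantorovich--Rubinstein lower bound $\Mass(Q)\ge W_1(B^+,B^-)$ together with the integrality of the assignment problem (Birkhoff--von Neumann) produces directly an integer $Q'$ with $\Mass(Q')\le W_1(B^+,B^-)\le\Mass(Q)$. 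Both arguments are short; yours is perhaps cleaner to state once one is willing to import Birkhoff--von Neumann and the Kantorovich--Rubinstein duality, while the paper's cycle-cancelling argument is more self-contained and, in spirit, closer to what one would do for a general kernel $\ker\phi$ in place of~$\Z$. One small point worth making explicit in your write-up is that $K$ is convex, so the straight segments $\llbracket q_{\sigma(k)},p_k\rrbracket$ building $Q'$ indeed lie in $K$, and that cancellations between overlapping segments can only decrease the mass of $Q'$, which works in your favor.
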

\begin{proof}
    We aim at proving the validity of the bounded ratio property $BR(1,D)$ of Definition \ref{d:br} with $D=1$, which suffices to prove Theorem \ref{t:lifting_d-1} thanks to Proposition \ref{p:main}.
For every $P\in\Po_1^{\tilde G}$ as in \eqref{def:polyhedral}, let 
\begin{equation*}
\mathcal{I}_P:=\{\ell\in\{1,\dots,N\}: \phi(g_\ell)\neq 0_G\}    
\end{equation*}
and denote $\hat P:=\sum_{\ell\in\mathcal{I}_P}g_\ell\llbracket\sigma_\ell\rrbracket$. Observe that, since $\pi(\partial(Q-\hat Q))=0$, then $\pi(\partial \hat Q)=0$. We claim that there exists $Z\in\Po_1^{\tilde{G}}(K)$ such that 
$$\partial Z=0,\quad\mass(\hat Q-Z)\leq\mass(\hat Q),\quad \mbox{and}\quad \mathcal{I}_{\hat Q-Z}\subsetneqq\mathcal{I}_Q.$$ The validity of the claim implies the validity of the bounded ratio property $BR(1,1)$: indeed, iterating at most $N$ times, we find a current $Z'\in\Po_1^{\tilde{G}}(K)$ such that 
$$\partial Z'=0,\quad \mass(\hat {Q}-Z')\leq\mass(\hat {Q}),\quad\mbox{and}\mathcal{I}_{\hat {Q}-Z'}=\emptyset,$$ so that the current 
$$Q':=Q+Z'=(Q-\hat Q)+(\hat Q-Z')$$ 
yields $$\pi(Q')=0,\quad \partial Q'=\partial Q,\quad\mbox{and}\quad \mass(Q')\leq \mass (Q).$$ 

The proof of the claim is analogous to that of \cite[Lemma A.1]{DLHMS}. Firstly we notice that the support of $\hat Q$ contains a loop, namely a chain of segments $\sigma_{\ell_1}, \dots, \sigma_{\ell_m}$ such that, up to a possible change of orientation, the second endpoint of $\sigma_{\ell_i}$ coincides with the first endpoint of $\sigma_{\ell_{i+1}}$, for every $i=1,\dots,m-1$ and the second endpoint of $\sigma_{\ell_m}$ coincides with the first endpoint of $\sigma_{\ell_1}$. This is an immediate consequence of the fact that the multiplicity $\theta$ of every point in the boundary of $\hat Q$ satisfies $\phi(\theta)=0_G$, but for every $\ell\in\mathcal{I}_P$ we have $\phi(g_\ell)\neq 0_G$. We implicitly switch the sign of $g_{\ell_i}$ whenever in the loop described above there is change in the orientation of $\sigma_{\ell_i}$.
Denote
$$\theta_+:=\min_{i=1,\dots,m}\{\min\{g_{\ell_i}-g:g\in \phi^{-1}(0), g<g_{\ell_i}\}\};$$
$$\theta_-:=\min_{i=1,\dots,m}\{\min\{g-g_{\ell_i}:g\in \phi^{-1}(0), g>g_{\ell_i}\}\}.$$
Define $Z_\pm:=\pm \theta_{\pm}\sum_{i=1}^m\llbracket\sigma_{\ell_i}\rrbracket$. Observe that $\partial Z_\pm=0$ and that $\mathcal{I}_{\hat Q- Z_\pm}\subsetneqq\mathcal{I}_Q$. Moreover the sign of the multiplicity of $\hat Q- Z_\pm$ on each segment $\sigma_{\ell_i}$ is either equal to the sign of $g_{\ell_i}$ or zero, and this implies that 
$$\mass(\hat Q- Z_\pm)=\mass(\hat Q)\mp \theta_\pm\sum_{i=1}^m\Haus^1(\sigma_{\ell_i})\sign(g_{\ell_i}),$$
hence either $Z:=Z_+$ or $Z:=Z_-$ has the desired property.
\end{proof}

%\section{Katz-ate}
%{\color{red}eliminare?}
%\begin{theorem}
%Let $\mu$ be a measure on $\R^n$ and let $F_1,\dots,F_n$ be 1-dimensional flat chains in $\R^n$ such that $\mu\ll\|F_i\|$ for every $i$ and $\{\vec F_i\}_{i=1}^n$ are linearly independent at $\mu$-a.e. $x$. Then $\mu\ll\Leb^n$. 
%\end{theorem}
%\begin{proof}
%\end{proof}
%%%%%%%%%%%%%%%%%%%%%%%%%%%%%%%%%%%%%%%%%%%%%%%%%%%%%%%%%%%%%%%%%
%
%	BIBLIOGRAPHY
%
%%%%%%%%%%%%%%%%%%%%%%%%%%%%%%%%%%%%%%%%%%%%%%%%%%%%%%%%%%%%%%%%%
%%%%%%%%%%%%%%%%%%%%%%%%%%%%%%%%%%%%%%%%%%%%%%%%%%%%%%%%%%%%%%%%%%%%%%%%%%
\bibliographystyle{plain}

%%%%%%%%%%%%%%%%%%%%%%%%%%%%%%%%%%%%%%%%%%%%%%%%%%%%%%%%%%%%%%%%%%%%%%%%%%

%%%%%%%%%%%%%%%%%%%%%%%%%%%%%%%%%%%%%%%%%%%%%%%%%%%%%%%%%%%%%%%%%
%
%	AFFILIATIONS
%
%%%%%%%%%%%%%%%%%%%%%%%%%%%%%%%%%%%%%%%%%%%%%%%%%%%%%%%%%%%%%%%%%

\vspace{1cm}
{\parindent = 0 pt\begin{footnotesize}
G.A.
\\
Dipartimento di Matematica\\
Universit\`a di Pisa\\
Largo Bruno Pontecorvo, 5, 56127 Pisa (PI) - Italy\\
e-mail: {\tt giovanni.alberti@unipi.it}
\\

A.M.
\\
Dipartimento di Matematica\\
Universit\`a di Trento\\
Via Sommarive 14, 38123 Povo (TN) - Italy\\
e-mail: {\tt andrea.marchese@unitn.it}

\end{footnotesize}
}
\end{document}